\documentclass[11pt,a4paper,twoside]{article}
\usepackage{bm, amsmath, amssymb, amsthm}
\usepackage{graphicx}

\topmargin=-18 true mm
\oddsidemargin=-2 true mm
\evensidemargin=-2 true mm
\setlength{\textheight}{252 true mm}  
\setlength{\textwidth}{160 true mm}

\def\calf{{\cal F}}
\def\<{\langle}
\def\>{\rangle}
\def\eps{\varepsilon}

\def\RR{\mathbb{R}}
\def\Hb{H^\bot}
\def\Ht{H^\top}

\newcommand\const{\operatorname{const}}
\newcommand\tr{\operatorname{Tr}}
\newcommand\Div{\operatorname{div}}
\newcommand\id{\operatorname{id}}
\def\Ric{\operatorname{Ric}}
\def\vol{\operatorname{vol}}


\newtheorem{corollary}{Corollary}
\newtheorem{definition}{Definition}

\newtheorem{lem}{Lemma}

\newtheorem{thm}{Theorem}

\author{Vladimir Rovenski\footnote{Department of Mathematics, University of Haifa, Mount Carmel, 31905 Haifa,  Israel
        \newline e-mail: {\tt vrovenski@univ.haifa.ac.il} }
 }

\title{The weighted mixed curvature of a foliated manifold}

\begin{document}

\date{}

\maketitle


\begin{abstract}
 In this paper, we introduce  the weighted mixed (sectional, Ricci and scalar) curvature of a foliated
(and almost-product)
Riemannian manifold $(M,g)$ equipped with a vector field~$X$.
We define seve\-ral functions ($q$th Ricci type curvatures),
which ``interpolate" between the weighed sectional and Ricci curvatures.
The novel concepts of the ``mixed curvature-dimension" condition
and ``synthetic dimension of a distribution"
 allow us to update the estimate of the diameter of a compact Riemannian foliation
and to prove new splitting theorems for almost-product manifolds of nonnegative/nonpositive weighted mixed scalar curvature.
In the case of positive (and nonnegative) weighted mixed sectional curvature
we explore the weighted generalization of Toponogov's conjecture on totally geodesic foliations.

\vskip1.5mm\noindent
\textbf{Keywords}:
Riemannian metric,
almost product manifold, foliation,
weighted mixed curvature,
totally geodesic,
integral formula,
splitting

\vskip1.5mm
\noindent
\textbf{Mathematics Subject Classifications (2010)} Primary 53C12; Secondary 53C21
\end{abstract}

\section*{Introduction}

There are different ``weighted" curvatures for a Riemannian manifold $(M,g)$ endowed with a vector field~$X$,
e.g., when $X$ is a gradient of a density function $f:M\to\RR$.
The~notion of weighted scalar curvature comes up in Perelman's work and is related to his functionals for the Ricci flow.
The results for weighted Ricci curvature were first proven by Lihnerovicz, and later by Bakry--Emery and many others.
The \textit{weighted Ricci tensor} of the triple $(M,g,X)$,
\begin{equation}\label{E-BER}
 \Ric_X^N=\Ric +\,\frac12{\cal L}_X\,g -\frac1{N}\,X^\flat\otimes X^\flat
\end{equation}
(called also the $N$-Bakry--Emery--Ricci tensor)
has become important in
geometric analysis, for a brief overview see \cite{case,ww2}.
Here $\Ric$ is the Ricci tensor, ${\cal L}$ is the Lie derivative, and $N$ is the \textit{synthetic dimension},
that is also allowed to be infinite.
 The ``musical" isomorphisms $\flat$ and $\sharp$ ``lower" and ``raise" indices of tensors.
Definition \eqref{E-BER} arises for the $\Ric$
of a warped product
of $M$ of dimension $N>0$ with a manifold $H$, when the warping function $\phi=-\frac1N\log f$ and $X=\nabla f$.
The study of \eqref{E-BER} was motivated by the~\textit{curvature-dimension} condition CD$(c, N)$, which requires $\Ric_X^N\ge c$,
where $N$ is
an upper bound of the ``generalized dimension" of the weighted manifold and $c$ is a lower bound of the $\Ric$.
Some of
comparison results about the curvature bounds extend to the weighted case.
 In~\cite{kwy1}, the weighted sectional curvature is related to
 the torsion free connection
$\nabla^\alpha$ {projectively equiva\-lent} to the Levi-Civita connection, $\alpha=d\phi=X^\flat$ being a 1-form;
in this case, the Ricci tensor of $\nabla^\alpha$ is $\Ric_{X}^1$, see \eqref{E-BER} with~$N=1$.

\smallskip

Distributions on manifolds (i.e., subbundles of the tangent bundle) appear in various situ\-ations -- e.g. as fields of tangent planes of foliations or kernels of differential forms.
 {\it Totally geodesic} and {\it Riemannian foliations} have the simplest
extrinsic geometry of the leaves (respectively, the tangent or orthogonal distribution has zero second fundamental form),
and were investigated in a number of works.
 There is interest of geometers to problems of existence of adapted metrics on foliations and almost product manifolds
with given curvature properties.
 There are three kinds of sectional curvature for a foliation: \textit{tangential, transversal} and \textit{mixed}
 (denoted by $K_{\rm mix}$).
The mixed curvature is encoded in the {Riccati} and {Jacobi} equations along leaf geodesics.
 For constant $K_{\rm mix}$ the solutions of above equations
 (and the relative behavior of geodesics on nearby leaves) are well-known.
{Riemannian submersions} with totally geodesic fibers  have $K_{\rm mix}\ge 0$.
Splitting of foliated manifolds with $K_{\rm mix}=0$ is possible.

 In the paper, three types of weighted mixed curvature (sectional, $q$th Ricci and scalar) of foliated and almost product manifolds are defined,  the notions of ``synthetic dimension of a distribution" and the ``mixed curvature-dimension" condition are introduced, and natural  generalizations of several results (known for the case of $X=0$) are obtained.

\smallskip

Our main object is $(M^{n+\nu},g,X)$ equipped  with complementary orthogonal distributions ${\cal D}^\top$ and
${\cal D}^\bot$ of ranks $\dim{\cal D}^\top=\nu$ and $\dim{\cal D}^\bot=n$.
Let $^\top$ and $^\perp$ denote orthogonal projections onto ${\cal D}^\top$ and ${\cal D}^\bot$, respectively.
 We define several functions on $(M,g,X,{\cal D}^\top,{\cal D}^\bot)$, which ``interpolate" between the weighed sectional and Ricci curvatures; such functions on $(M,g)$ were introduced by H.\,Wu, and then studied by many geometers, see surveys in \cite{rov-m,rov2000}.
Let~$W^q$ be a subspace of ${\cal D}^\top_m$ spanned by $q\le\nu$ orthonormal vectors
$\{x_1,\dots, x_q\}$ at a point $m\in M$, and $y\in {\cal D}^\bot_m$ a unit vector.
Set $\Ric^{\top}_q(y;W):=\sum_{i=1}^q K(y,x_i)$ and $\Ric^{\top}:=\Ric^{\top}_\nu$.
The class of Riemannian  manifolds with $\Ric^{\top}_q>0$ is lager than class of manifolds with positive mixed sectional curvature.

\begin{definition}\rm
The ${\cal N}$-\textit{weighted mixed $q$th Ricci curvature} of $\{y;W\}$~is defined by
\begin{equation}\label{E-part-N-ric}
 \Ric^{\top,{\cal N}}_{q,X}(y;W) = \Ric^{\top}_q(y;W)
 +\frac q2\,{\cal L}_{X/\nu}\,g(y,y) +\frac{q\,\nu}{\cal N}\,g(X/\nu,y)^2,
\end{equation}
where ${\cal N}\in\RR$ is the \textit{synthetic dimension of} ${\cal D}^\top$.
For ${\cal N}=\nu$, the LHS of \eqref{E-part-N-ric} is $\Ric^{\top}_{q,X}(y;W)$,
and for $q=\nu$, the LHS of \eqref{E-part-N-ric} is
 $\Ric^{\top,{\cal N}}_{X}(y,y):=\Ric^{\top,{\cal N}}_{\nu,X}(y; {\cal D}^\top_m)$,
called the ${\cal N}$-\textit{weighted partial Ricci curvature}.
By the \textit{mixed curvature-dimension} condition, CD$^\top(c,{\cal N},q)$, we mean the inequality
\begin{equation}\label{E-MCD}
 \Ric^{\top,{\cal N}}_{q,X}\ge c.
\end{equation}
\end{definition}

Similarly, we define $\Ric^{\bot,N}_{q,X}(x;W)$ and $\Ric^{\bot}_{q,X}(x;W)$ for $W^q\subset{\cal D}^\bot$,
$\Ric^{\bot,{\cal N}}_{X}$,
$\Ric^{\bot}_{X}$
and condition CD$^\bot(c,{N},q)$.
 Notice~that
 $\Ric^{\top,{\cal N}}_{q,X}(y;W)<\Ric^{\top,{\cal N}'}_{q,X}(y;W)$
for
 ${\cal N}>{\cal N}'>0$,

\begin{equation}\label{Eq-Ric-N}
 \Ric^{\top,{\cal N}}_{q,X}(y;W) = \Ric^{\top}_{q,X}(y;W) +q\,\frac{{\cal N}-\nu}{\nu^2}\,g(X,y)^2.
\end{equation}

In~Section~\ref{sec:w-Ric}, we use \eqref{E-MCD} to estimate the diameter of Riemannian foliations.

\smallskip

The~\textit{weighted mixed sectional curvature} is the weighted sectional curvature of planes that non-trivially intersect
each of the distributions, see \eqref{E-part-N-ric} for $q=1$ and $W=\{x\}$,
\begin{equation}\label{E-w-sect}
 K^{\top,{\cal N}}_X(y,x)
 :=\Ric^{\top,{\cal N}}_{1,X}(y;\{x\})
 =K(y,x) +\big(\frac12\,{\cal L}_{X/\nu}\,g(y,y)+\frac\nu{\cal N}\,g(X/\nu,y)^2\,\big)\,x^2,
\end{equation}
in particular,
 $K^{\top}_X(y,x):=K^{\top,\nu}_X(y,x)$.
Similarly we define $K^{\bot,{N}}_X(x,y)$ and $K^{\bot}_X:=K^{\bot,n}_X$.
The $x$ and $y$ in \eqref{E-w-sect}
are placed in asymmetric way; generally, we have
$K^{\bot}_X(x,y)\ne K^{\top}_X(y,x)$.

\smallskip

One of the simplest curvature invariants of a pair $({\cal D}^\top,{\cal D}^\bot)$
is a function ${\rm S}_{\,\rm mix}:M\to\RR$,
\begin{equation}\label{E-Smix}
 {\rm S}_{\,\rm mix} =\tr_g \Ric^\top =\tr_g \Ric^\bot,
\end{equation}
see~\cite{r2010,rz1,wa1}, called the \textit{mixed scalar curvature}, i.e., an averaged mixed sectional curvature.
For~instance,
${\rm S}_{\,\rm mix}=\Ric(y,y)$
when $n=1$ and a unit vector field $y$ spans ${\cal D}^\bot$ locally.
In contrast to scalar curvature, ${\rm S}_{\,\rm mix}$ has strong relations with the extrinsic geometry, see Section~\ref{sec:scal},
and is involved in such research topics as the mixed Einstein--Hilbert action \cite{r2,rz-2} and prescribing the mixed scalar curvature on Riemann--Cartan (in particular, pseudo-Riemannian) manifolds \cite{rze-27b}.

Based on \eqref{E-part-N-ric} and \eqref{E-Smix}, we define the $(N,{\cal N})$-\textit{weighted mixed scalar curvature}
by
\begin{equation}\label{E-w-Smix-N}
 {\rm S}^{N,{\cal N}}_{\,{\rm mix},X}
 =\frac{1}{2}\tr_g\big(\Ric^{\bot,N}_{X} + \Ric^{\top,{\cal N}}_{X}\big)
 = {\rm S}_{\,\rm mix} + \frac{1}{2}\Div X +\frac1{2N}\,\|X^\top\|^2 +\frac1{2\cal N}\,\|X^\bot\|^2,
\end{equation}
where $N,{\cal N}\in\RR$ are \textit{synthetic dimensions of distributions} ${\cal D}^\bot$ and ${\cal D}^\top$, respectively.
For $N=n$ and ${\cal N}=\nu$, the LHS of \eqref{E-w-Smix-N} is ${\rm S}_{\,{\rm mix},X}$.
Notice~that
\begin{equation*}
 {\rm S}^{N,{\cal N}}_{\,{\rm mix},X} = {\rm S}_{\,{\rm mix},X}
 +\frac{{\cal N}-\nu}{2\nu\,{\cal N}}\,\|X^\bot\|^2 +\frac{N-n}{2n\,N}\,\|X^\top\|^2.
\end{equation*}
In~Section~\ref{sec:scal} we use the weighted mixed scalar curvature \eqref{E-w-Smix-N} to prove new integral formulas and update splitting theorems for almost-product manifolds.

\smallskip

Let $\calf^\nu$ be a totally geodesic foliation of $(M^{n+\nu},g)$,
and $R^\bot_{x}=(R_{\,\cdot,x}\,x)^\bot
\ (x\in T\calf)$
the Jacobi operator on ${\cal D}^\bot$.
 If~the leaves are closed and
 $R^\bot_x>0\ (x\ne0)$, then
$\nu<n$; otherwise,
any two of them will intersect.
D.\,Ferus~\cite{Fe} found a topological obstruction for existence of totally geodesic foliations
(and applied it to submanifolds with relative nullity):
\textit{if
$R^\bot_x\equiv k\id^\bot$ for some $k=\const>0$ and all unit $x\in T L$ on a complete leaf $L$, then $\nu<\rho(n)$}.
Here
$\rho(n)-1$ is the number of
linear independent vector fields on a sphere~$S^{n-1}$,
\[
 \rho(({\rm odd})\,2^{4b+c})=8b+2^c\ {\rm for\ some}\ b\ge 0,\ 0\le c\le3.
\]
Notice that $\rho(n)\le2\,\log_2n+2\le n$.
 Among Toponogov's many contributions to Riemannian geometry is the following
\textbf{conjecture} (see survey \cite{rov-m} and the bibliography~therein):
\textit{The
inequali\-ty $\nu<\rho(n)$ holds for totally geodesic foliations of closed Riemannian manifolds with
$K_{\rm mix}>0$}.

We introduce the \textit{weighted  ${\cal D}^\bot$-Jacobi operator} by
\begin{equation*}
 R^{\bot}_{X,x} = R^\bot_{x} + \big(\,\frac12\,{\cal L}_{X/n}\,g(x,x) + g(X/n,x)^2\,\big)\id^\bot,
 \quad x\in{\cal D}^\top,
\end{equation*}
and similarly define the \textit{weighted  ${\cal D}^\top$-Jacobi operator} $R^{\top}_{X,y}\ (y\in{\cal D}^\bot)$.
Set
\[
 \Ric^\bot_X(x,x)=\tr_g R^{\bot}_{X,x},\quad
 \Ric^\top_X(y,y)=\tr_g R^{\top}_{X,y}.
\]

\smallskip

In Section~\ref{sec:w-Top} we explore the following \textbf{``weighted" Toponogov's conjecture}:

\smallskip

\textit{The inequality $\nu<\rho(n)$ holds for a totally geodesic foliation $\calf^\nu$ of a closed manifold $(M^{n+\nu},g,X)$ under assumption
$ R^{\bot}_{X,x} > \|X^\top/n\|^2\,{\rm id}^\bot $ for all unit vectors $x\in T\calf$}.

\smallskip\noindent\
Since $R^{\bot}_{X,x}>0\ (x\ne0)$
yields $R^\bot_x > -\frac12\,{\cal L}_{X/n}\,g(x,x)\id^\bot$, then $R^\bot_x$ can be negative somewhere.

\section{The weighted mixed $q$th Ricci curvature}
\label{sec:w-Ric}

The weighted curvature appears in the formula for the second variation of energy of a path.

\begin{lem} Let $\calf $ be a Riemannian foliation of $(M,g,X)$,
and $\bar\gamma:[a,b]\times(-\eps,\eps)\to M$ be a variation of the geodesic $\gamma(t)=\bar\gamma(t,0)$
and the variation vector field on $\gamma$, $x(t)=\partial_s\bar\gamma |_{s=0}$ belongs to $T\calf$.
Then
we have the following index form on a geodesic $\gamma$:
\begin{equation}\label{E-2var-w}
 {\cal I}(x,x) = \int_a^b \big( |\dot x - g(\dot\gamma,X)\,x|^2 -K^\top_X({\dot\gamma},x)\,|x|^2
 \big)\,dt
 +g(\dot\gamma,X) |x|^2\,|_a^b .
\end{equation}
\end{lem}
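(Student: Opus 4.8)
The plan is to start from the classical second variation of energy formula for the geodesic $\gamma$, namely
\begin{equation*}
 {\cal I}(x,x) = \int_a^b \big( |\dot x|^2 - \langle R(x,\dot\gamma)\dot\gamma, x\rangle \big)\,dt,
\end{equation*}
valid here because $x(t)$ vanishes at the endpoints in the tangential directions or, more precisely, because the variation vector field lies in $T\calf$ and $\gamma$ is a leaf geodesic; the Riemannian foliation hypothesis guarantees $\nabla_{\dot\gamma}\dot\gamma=0$ along $\gamma$ so no first-order terms survive. The key is then to rewrite the curvature term $\langle R(x,\dot\gamma)\dot\gamma,x\rangle = K(\dot\gamma,x)\,|x|^2$ (for the plane spanned by $\dot\gamma$ and $x$, appropriately normalized) and to absorb the weighted correction coming from the definition \eqref{E-w-sect} of $K^\top_X(\dot\gamma,x)$.

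First I would isolate the discrepancy between $K(\dot\gamma,x)$ and $K^\top_X(\dot\gamma,x)$, which by \eqref{E-w-sect} (with $\nu$ the rank of $T\calf$, $y=\dot\gamma$, and noting $|\dot\gamma|=1$) is precisely $\big(\frac12{\cal L}_{X/\nu}g(\dot\gamma,\dot\gamma)+\frac{\nu}{\cal N}g(X/\nu,\dot\gamma)^2\big)|x|^2$; however, in the index-form statement \eqref{E-2var-w} the weight enters only through the scalar $g(\dot\gamma,X)$, so the relevant normalization must be $\nu=1$ in the Jacobi-operator convention, i.e. the weighted correction is $\big(\frac12{\cal L}_X g(\dot\gamma,\dot\gamma)\big)|x|^2 = g(\nabla_{\dot\gamma}X,\dot\gamma)\,|x|^2 = \tfrac{d}{dt}\big(g(\dot\gamma,X)\big)|x|^2$ along $\gamma$, using $\nabla_{\dot\gamma}\dot\gamma=0$. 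Next I would expand the target integrand: $|\dot x - g(\dot\gamma,X)x|^2 = |\dot x|^2 - 2g(\dot\gamma,X)\langle\dot x,x\rangle + g(\dot\gamma,X)^2|x|^2 = |\dot x|^2 - g(\dot\gamma,X)\tfrac{d}{dt}|x|^2 + g(\dot\gamma,X)^2|x|^2$. Integrating the middle term by parts turns $-\int g(\dot\gamma,X)\tfrac{d}{dt}|x|^2\,dt$ into $\int \tfrac{d}{dt}\big(g(\dot\gamma,X)\big)|x|^2\,dt - g(\dot\gamma,X)|x|^2\big|_a^b$, and the boundary term is exactly the $+g(\dot\gamma,X)|x|^2|_a^b$ appearing in \eqref{E-2var-w}. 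Collecting terms, the $\int\tfrac{d}{dt}(g(\dot\gamma,X))|x|^2$ piece accounts for the $\frac12{\cal L}_X g$ part of the weighted curvature, and the remaining $\int g(\dot\gamma,X)^2|x|^2\,dt$ together with the appropriate $\frac{1}{\cal N}$-normalized term should match the $g(X/n,\dot\gamma)^2$-type correction built into the weighted Jacobi operator — here I would need to be careful about which synthetic dimension ($N$, $\cal N$, $n$, or $\nu$) is in force and confirm that the stated formula implicitly uses the $\nu=1$ (equivalently, $N=1$) normalization, consistent with the $\Ric^1_X$ remark in the introduction linking weighted sectional curvature to the projectively equivalent connection $\nabla^\alpha$.

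The main obstacle I anticipate is bookkeeping of the normalization constants: the definition \eqref{E-w-sect} carries explicit factors $1/\nu$ and $\nu/{\cal N}$, but the index form \eqref{E-2var-w} is written with the bare $g(\dot\gamma,X)$, so one must either interpret $K^\top_X$ in \eqref{E-2var-w} with $\nu=1$ or track how the $X$ in the Lemma relates to the $X/\nu$ in \eqref{E-w-sect}. Once that convention is pinned down, the computation is a one-line integration by parts plus completing the square, and the boundary term falls out automatically; the Riemannian foliation hypothesis is used only to ensure $\gamma$ is a geodesic of $M$ (not merely of a leaf) so that the standard second variation formula with no acceleration terms applies, and that $x(t)\in T\calf$ keeps the variation within the relevant class. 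I would finish by remarking that when $X=0$ this reduces to the classical index form for the mixed sectional curvature, recovering the known case cited in the introduction.
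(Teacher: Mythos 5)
Your proposal is correct and is essentially the paper's own argument: the paper merely notes that for a Riemannian foliation a geodesic meeting one leaf orthogonally stays orthogonal to all leaves (this, rather than the geodesic equation itself, is what that hypothesis buys — it forces $g(x,\dot\gamma)=0$ so the curvature term is a genuinely mixed sectional curvature) and then defers to Wylie's Proposition~5.1, whose proof is exactly your completion of the square plus integration by parts using $\frac{d}{dt}\,g(\dot\gamma,X)=\frac12\,{\cal L}_X g(\dot\gamma,\dot\gamma)$ along the geodesic. The $1/\nu$ normalization mismatch you flag is real, but it is an inconsistency in the paper's own conventions (the identity holds verbatim with the unnormalized weight $g(\dot\gamma,X)$, i.e. reading the correction term as in Wylie), not a gap in your argument.
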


\begin{proof}
Recall that
for a Riemannian foliation any geodesic started orthogonally to a leaf remains to be orthogonal to the leaves.
Thus, the proof
is similar to the proof of \cite[Proposition~5.1]{ww1}.
\end{proof}

\noindent
Let ${\rm diam}\,\calf$ be the maximal distance between the~leaves of a foliation $\calf$ of $(M,g,X)$.

\begin{lem}[see \cite{rov-m}]\label{P-01}
Let $V_1,\,V_2$ are subspaces in $\Bbb R^l$, $\dim V_1=\dim V_2$.
Then there exist orthonormal bases $\{a_i\}\subset V_1,\ \{b_i\}\subset V_2$
(which correspond to extremal values of angle between given subspaces) with the
property $a_i\,\bot\,b_j\ (i\ne j)$.
\end{lem}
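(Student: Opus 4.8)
The plan is to proceed by induction on $k:=\dim V_1=\dim V_2$, extracting one pair of vectors at a time by a variational (extremal-angle) argument, and then restricting to orthogonal complements. First I would handle the base case: for $k=1$, write $V_1=\mathrm{span}\{a_1\}$, $V_2=\mathrm{span}\{b_1\}$ with $a_1,b_1$ unit vectors, and there is nothing to check since there is no index $j\ne i$. For the inductive step, consider the function $(u,v)\mapsto g(u,v)$ on the compact set $\{u\in V_1:|u|=1\}\times\{v\in V_2:|v|=1\}$; let $(a_1,b_1)$ be a point where the absolute value $|g(u,v)|$ attains its maximum (the cosine of the smallest principal angle). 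Replacing $b_1$ by $-b_1$ if necessary we may assume $g(a_1,b_1)=c_1\ge 0$.

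The key step is the first-variation computation that forces the orthogonality $a_1\perp(V_2\ominus\mathrm{span}\{b_1\})$ and $b_1\perp(V_1\ominus\mathrm{span}\{a_1\})$. Concretely, for any $w\in V_2$ with $w\perp b_1$, the curve $t\mapsto(\cos t)\,b_1+(\sin t)\,w$ stays in the unit sphere of $V_2$, so $\frac{d}{dt}\big|_{t=0}\,g(a_1,(\cos t)b_1+(\sin t)w)=g(a_1,w)$ must vanish by maximality; hence $g(a_1,w)=0$ for all such $w$, i.e. $a_1\perp(V_2\cap b_1^{\perp})$. Symmetrically $b_1\perp(V_1\cap a_1^{\perp})$. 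Now set $V_1':=V_1\cap a_1^{\perp}$ and $V_2':=V_2\cap b_1^{\perp}$; these are subspaces of $\mathbb{R}^l$ of equal dimension $k-1$, so by the induction hypothesis there are orthonormal bases $\{a_2,\dots,a_k\}\subset V_1'$ and $\{b_2,\dots,b_k\}\subset V_2'$ with $a_i\perp b_j$ for $2\le i\ne j\le k$. Adjoining $a_1,b_1$ gives orthonormal bases of $V_1,V_2$; the new orthogonality relations $a_1\perp b_j$ ($j\ge 2$) and $b_1\perp a_i$ ($i\ge 2$) are exactly the first-variation conclusions, while $a_i\perp b_j$ for $i,j\ge 2$, $i\ne j$, comes from the inductive hypothesis. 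This closes the induction.

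I expect the main (mild) obstacle to be bookkeeping rather than anything deep: one must make sure that the extremal vectors $a_1,b_1$ can be chosen so that the orthogonal complements $V_1',V_2'$ still have the same dimension (true because we remove exactly one dimension from each), and that the extremality is used in the right place — it is only the \emph{vanishing of the first variation} at an interior critical point of a smooth function on a compact manifold, not full maximality, that is needed, so even a saddle-type critical configuration of the angle function would work. A secondary point worth a line is the degenerate situation $V_1\cap V_2\ne\{0\}$ or $V_1=V_2$, where some principal angles are $0$; the argument is unaffected since $g(a_1,b_1)=1$ is then allowed and the first-variation identity still holds. Since the lemma is quoted from \cite{rov-m}, in the paper itself it would suffice to cite that reference, but the inductive extremal-angle argument above is the standard self-contained proof.
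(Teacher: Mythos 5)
Your proof is correct. The paper itself gives no argument for this lemma --- it is quoted from \cite{rov-m} --- and what you have written is the standard inductive construction of principal vectors: maximize $g(u,v)$ over the product of unit spheres, use the vanishing first variation along curves $(\cos t)\,b_1+(\sin t)\,w$ to get $a_1\perp (V_2\cap b_1^{\perp})$ and $b_1\perp(V_1\cap a_1^{\perp})$, and recurse on the orthogonal complements, which indeed keep equal dimension. The only point worth stating explicitly is that you should maximize $g(u,v)$ itself (its maximum over the product of spheres is automatically $\ge 0$, since $v\mapsto -v$ flips the sign) rather than $|g(u,v)|$, so that the first-variation identity $g(a_1,w)=0$ follows directly even in the degenerate case $g(a_1,b_1)=0$; with that reading, the argument is complete and matches the successive-extremal-angles description in the statement.
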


\begin{thm}\label{T-3.29}
Let $(M^{n+\nu},g,X)$ be endowed with a Riemannian foliation $\calf^\nu$ with compact leaves such that
{\rm CD}$^\top(c,{\cal N},q)$, see \eqref{E-MCD}, holds
for some ${\cal N}\le \nu$, $c>0$ and $1\le q\le\nu$.
Then
\begin{equation}\label{Eq-w-Ric}
 ({\rm diam}\,\calf)^2 \le \frac{2q\|X^\bot\|}{c{+}q\|X^\bot\|^2}
 +\left\{\begin{array}{cc}
  \frac{2q}c\,\|h_\calf\| +\frac{\pi^2}4 & {\rm if} \ \nu\le n-1,\\
  \!\frac{2q}c\,\|h_\calf\| +(q-\nu+n-1)\frac{\pi^2}{4 c} & \ {\rm if} \ n-1<\nu< n+q-1,\\
  \frac{2q}c\,\|h_\calf\| & {\rm if} \ \nu\ge n+q-1.\\
 \end{array}\right.
\end{equation}
\end{thm}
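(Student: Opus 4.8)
The plan is to mimic the classical Bonnet–Myers argument for foliations (as in the unweighted case), but with the index form~\eqref{E-2var-w} replacing the ordinary second variation, and with the weighted curvature assumption CD$^\top(c,{\cal N},q)$ feeding in through a clever choice of $q$ test fields rather than a single one. First I would fix two leaves $L_0, L_1$ and a minimizing geodesic $\gamma:[0,\ell]\to M$ realizing the distance between them, with $\ell = {\rm diam}\,\calf$; such a $\gamma$ meets both leaves orthogonally, and by the Riemannian-foliation property $\dot\gamma$ stays orthogonal to the leaves all along, so $\dot\gamma\in{\cal D}^\bot$ and the variation fields tangent to the leaves live in ${\cal D}^\top$. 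For each $t$, let $W^q(t)\subset{\cal D}^\top_{\gamma(t)}$ be spanned by $q$ orthonormal vectors; I want these obtained by parallel transport along $\gamma$ of a fixed $q$-frame, but since parallel transport need not preserve ${\cal D}^\top$ when $\calf$ is merely Riemannian (only totally geodesic foliations have that), I would instead use Lemma~\ref{P-01}: compare $W^q$ at the two endpoints inside the fiber, pick the extremal orthonormal bases $\{a_i\}$, $\{b_i\}$ with $a_i\perp b_j$ for $i\ne j$, and interpolate each pair $(a_i,b_i)$ by a suitable field $x_i(t)$ supported along $\gamma$, using the angle $\theta_i$ between them; the mutual orthogonality keeps the cross-terms manageable.

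The core computation is then to sum the index forms $\sum_{i=1}^q{\cal I}(x_i,x_i)$ over this $q$-frame. On the one hand, if $\gamma$ were longer than the claimed bound, I would exhibit fields making this sum negative, contradicting minimality (the index form of a minimizing geodesic is $\ge 0$). On the other hand, expanding using~\eqref{E-2var-w},
\[
 \sum_{i=1}^q {\cal I}(x_i,x_i) = \int_0^\ell\Big(\sum_i|\dot x_i - g(\dot\gamma,X)x_i|^2 - \sum_i K^\top_X(\dot\gamma,x_i)|x_i|^2\Big)\,dt + g(\dot\gamma,X)\sum_i|x_i|^2\big|_0^\ell,
\]
and here I would relate $\sum_i K^\top_X(\dot\gamma,x_i)|x_i|^2$ to $\Ric^{\top,\nu}_{q,X}(\dot\gamma;W^q)$ (when the $x_i$ are orthonormal), then use~\eqref{Eq-Ric-N} to pass from synthetic dimension $\nu$ to ${\cal N}\le\nu$ — note that ${\cal N}\le\nu$ makes $\Ric^{\top,{\cal N}}_{q,X}\ge\Ric^{\top,\nu}_{q,X}$ is the \emph{wrong} direction, so I must be careful: with $\frac{{\cal N}-\nu}{\nu^2}\le 0$ the hypothesis $\Ric^{\top,{\cal N}}_{q,X}\ge c$ gives $\Ric^{\top,\nu}_{q,X}\ge c + q\frac{\nu-{\cal N}}{\nu^2}g(X,\dot\gamma)^2\ge c$, so the bound only improves, which is why the statement needs ${\cal N}\le\nu$. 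The term $g(\dot\gamma,X)$ along $\gamma$ is controlled by $\|X^\bot\|$ since $\dot\gamma\in{\cal D}^\bot$; completing the square in the integrand produces the $\frac{2q\|X^\bot\|}{c+q\|X^\bot\|^2}$ contribution, while the norm $\|h_\calf\|$ of the second fundamental form of $\calf$ enters through the boundary/orthogonality correction needed because the $x_i$ are not quite parallel and ${\cal D}^\top$ is not parallel along $\gamma$ — this is where the $\frac{2q}{c}\|h_\calf\|$ terms come from.

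The three-case split on $\nu$ versus $n$ and $q$ comes from counting how many of the $q$ interpolating fields can be taken "$\sin$-shaped" (vanishing at both endpoints, contributing the sharp $\pi^2/4$ à la Myers) versus how many must be handled by the cheaper "parallel-type" estimate because the fiber dimension $\nu$ is large relative to $n$: when $\nu\le n-1$ there is enough room in ${\cal D}^\bot$ that a single $\sin$-field suffices and the rest contribute nothing extra, giving the $+\pi^2/4$; when $\nu\ge n+q-1$ none of them can be made to vanish at both ends in a useful way and only the $h_\calf$ term survives; the intermediate range interpolates linearly in $(q-\nu+n-1)$. I expect the \textbf{main obstacle} to be bookkeeping this dimension count rigorously: precisely which of the $q$ fields $x_i$ get which profile, verifying the cross-terms $g(\dot x_i,\dot x_j)$ and $g(x_i,x_j)$ vanish (or are small) off-diagonal thanks to Lemma~\ref{P-01} and the choice of extremal bases, and tracking the boundary term $g(\dot\gamma,X)\sum_i|x_i|^2|_0^\ell$ against the endpoint orthogonality of $\gamma$ to the leaves. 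The weighted ingredients themselves are routine once the unweighted skeleton with the correct case analysis is in place.
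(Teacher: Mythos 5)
Your proposal follows essentially the same route as the paper: a minimizing geodesic between two leaves meeting them orthogonally, the weighted index form \eqref{E-2var-w} summed over a $q$-frame of variation fields built from the extremal bases of Lemma~\ref{P-01}, reduction of the hypothesis ${\rm CD}^\top(c,{\cal N},q)$ to $\Ric^{\top}_{q,X}\ge c$ via \eqref{Eq-Ric-N} using ${\cal N}\le\nu$, and a dimension count governing the three-case split. The one detail worth correcting is the mechanism for the $\pi^2/4$ terms: in the paper they come not from Myers-type fields vanishing at the endpoints but from the rotation angles $\theta_i\in[0,\pi/2]$ of the fields $x_i(t)=\cos(\theta_i t)\,\bar a_i+\sin(\theta_i t)\,\tilde b_i(t)$ interpolating between the two leaf tangent spaces (so each contributes $\theta_i^2\le\pi^2/4$), with the number $s\le q-\nu+n-1$ of such rotating (non-parallel) fields determined by the dimension of the intersection of the parallel transport of $T_{m_1}L_1$ along $\gamma$ with $T_{m_2}L_2$ --- exactly the bookkeeping you flag as the main obstacle.
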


\begin{proof}
Consider two leaves $L_1,L_2$ with distance $l={\rm dist}\,(L_1,L_2)$,
which is reached at points $m_1\in L_1$ and $m_2\in L_2$.
By the first variational formula of arc-length, the shortest geodesic
$\gamma(t)\ (0\le t\le 1)$ with length $l$ between $m_1,m_2$ is orthogonal to
$L_1$ and $L_2$. Since $\calf$ is a Riemannian foliation, $\gamma$ intersect the leaves orthogonally for all $t\in(0,1)$.

 Assume the second case: $n-1<\nu<n-1+q$, see \eqref{Eq-w-Ric}. Then the parallel displacement of
$T_{m_1}L_1$ along $\gamma$ will intersect $T_{m_2}L_2$ by $q'-$dimensional
subspace $V_2$, where $\nu-n+1\le q'<q$. The inverse image of $V_2$ in
$T_{m_1}L_1$ we denote by $V_1$.
For small $l$, let $T_{m_1}L_1=V_1\oplus V_1'$
be the orthogonal decomposition where the parallel image of $V_1'$ is uniquely
projected onto $T_{m_2}L_2$ (denote its orthogonal projection in $T_{m_2}L_2$ by $V_2'$).
Let vectors $e_1,\dots,e_{q'}$ form an orthonormal basis of $V_1$ and continue
them to parallel vector fields $\bar e_1,\dots,\bar e_{q'}$ along $\gamma$.
Obviously, $\bar e_1(m_2),\dots,\bar e_{q'}(m_2)$ belong to $V_2$.
Let vectors $a_1,\dots,a_s$ (where $s=q-q'=\dim V_1'$) form an orthonormal basis of
$V_1'$ and vectors $b_1,\dots,b_s$ form an orthonormal basis of $V_2'$, and
continue them to parallel vector fields $\bar a_1,\dots,\bar a_s$ and
$\bar b_1,\dots,\bar b_s$ along $\gamma$.

Consider the field of parallel planes $\sigma_i(t)$ along $\gamma$, spanned by
vectors $\bar a_i(t),\,\bar b_i(t)$. Assume, that $\{a_i\},\,\{b_i\}$
correspond to extremal angles between $V_1'$ and parallel image of $V_2'$, see Lemma~\ref{P-01}.
Then
$\sigma_i(t)\,\bot\,\sigma_j(t)$ for $i\ne j$.
We take the unit vector $\tilde b_i(t)\in\sigma_i(t)$ such that $g(\bar a_i,\tilde b_i(t))=0$.
One may choose $b_i$ and $\tilde b_i(t)$ with the properties
$g(\bar a_i,\bar b_i)\ge 0$ and $g(\bar b_i,\tilde b_i(t))\ge 0$.
Let us introduce the unit vector fields
$x_i(t)=(\cos\theta_it)\,\bar a_i+(\sin\theta_it)\,\tilde b_i(t)$
along $\gamma$,
where $\theta_i=\arccos(\bar a_i,\bar b_i)\in[0,\frac\pi 2]$.
Note that $g(x_i(t),x_j(t))=0\ (i\ne j)$, and $g(\dot x_i(t),x_i(t))=0$.

We have $q'+s=q$.
 Using the 2nd variation of ${\cal E}$ of $\gamma$, \eqref{E-2var-w}, along $x_i(t)$ and $\bar e_j$, we obtain
\begin{eqnarray}\label{E-3.65}
\nonumber
 && \hskip-11mm
 {\cal E}_{x_i}''(0) = (h_L(b_i,b_i),{\dot\gamma(1)}/l){-}(h_L(a_i,a_i),{\dot\gamma(0)}/l) +\theta_i^2 \\
\nonumber
 && -\,l^2\int_0^1 [K^\top_X({\dot\gamma},x_i(t)) +g(\dot\gamma/l,X)^2]\,dt +2g(\dot\gamma/l,X)\,|_0^1\ge 0,\\
\nonumber
 && \hskip-11mm
 {\cal E}_{\bar e_j}''(0) = (h_L(\bar e_j,\bar e_j),{\dot\gamma(1)}/l) -(h_L(e_j,e_j),{\dot\gamma(0)}/l) \\
 && -\,l^2\int_0^1 [K^\top_X({\dot\gamma},\bar e_j)+g(\dot\gamma/l,X)^2]\,dt +2g(\dot\gamma/l,X)\,|_0^1\ge 0.
\end{eqnarray}
Since $s=q-q'\le q-\nu+n-1$, $\sum_i\theta_i^2\le \frac{\pi^2}4s$, we have
\begin{eqnarray*}
 \sum\nolimits_{i=1}^{q'}|(h_L(b_i,b_i),{\dot\gamma(1)}/l)-(h_L(a_i,a_i),{\dot\gamma(0)}/l)|\le2q'||h_L||,\\
 \sum\nolimits_{j=1}^s|(h_L(\bar e_j,\bar e_j),{\dot\gamma(1)}/l)-(h_L(e_j,e_j),{\dot\gamma(0)}/l)|\le2s||h_L||.
\end{eqnarray*}
By \eqref{Eq-Ric-N} and condition for $\Ric^{\top,{\cal N}}_{q,X}$ we get
 $\Ric^{\top}_{q,X}(\dot\gamma;W) \ge c$
for $W$ spanned by $x_i(t)$ and $\bar e_j$;
hence,
\[
 \sum\nolimits_{i=1}^{q'}K^\top_X({\dot\gamma},x_i(t))+\sum\nolimits_{j=1}^{q-q'}K^\top_X({\dot\gamma},\bar e_j)\ge c.
\]
Then from \eqref{E-3.65} follows
\[
 l^2(c+q\|X^\bot\|^2)\le 2q||h_L||+(q-\nu+n-1)\,{\pi^2}/4+2q\,\|X^\bot\|,
\]
which completes the proof of second formula in \eqref{Eq-w-Ric}.
The other two cases are similar.
\end{proof}

\begin{corollary}
Let $(M^{n+\nu},g,X)$ be
endowed with a compact totally geodesic foliation $\calf^\nu$
with condition $\Ric^{\top,{\cal N}}_{q,X}>0$ for some ${\cal N}\le \nu$ and $1\le q\le\nu$ along some leaf, and let $X$ be tangent to the leaves.
Then $\nu < n+q-1$.
\end{corollary}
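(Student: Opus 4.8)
The plan is to argue by contradiction, invoking Theorem~\ref{T-3.29} (more precisely, the estimate produced in its proof). Suppose $\nu\ge n+q-1$. Since $\calf^\nu$ is totally geodesic, the second fundamental form of its leaves vanishes, so $\|h_\calf\|\equiv0$; since $X$ is tangent to the leaves, $X^\bot=0$, hence $\|X^\bot\|\equiv0$. In the third case of \eqref{Eq-w-Ric} — the one relevant here since $\nu\ge n+q-1$ — the entire right-hand side then collapses to $0$, so the argument of Theorem~\ref{T-3.29}, applied to two leaves, forces the distance between them to vanish. What remains is to supply a genuine pair of distinct leaves and a positive constant $c$ for which that argument is available.

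First I would upgrade the pointwise positivity on one leaf to a uniform bound near it. Let $L$ be a leaf with $\Ric^{\top,{\cal N}}_{q,X}>0$. The configurations $\{y;W\}$ with $y\in{\cal D}^\bot$ a unit vector and $W^q\subset{\cal D}^\top$ a $q$-dimensional subspace, taken over $L$, form a compact space on which $\Ric^{\top,{\cal N}}_{q,X}$ is continuous and strictly positive; hence it has a positive minimum there, and since strict positivity is an open condition there are a tubular neighborhood $U$ of $L$ and a constant $c>0$ with $\Ric^{\top,{\cal N}}_{q,X}\ge c$ on $U$, i.e.\ ${\rm CD}^\top(c,{\cal N},q)$ holds on $U$.

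Next I would use that every leaf is a closed embedded submanifold, so near a point of $L$ the foliation is a local fibration and there are leaves $L'\ne L$ with $0<{\rm dist}(L,L')$ arbitrarily small. Fixing such an $L'$ close enough, a minimizing geodesic $\gamma$ from $L$ to $L'$ is short and stays inside $U$; running the second-variation computation of Theorem~\ref{T-3.29} along $\gamma$ — the curvature term bounded by $c$ on $U\supset\gamma$, and the boundary and $\|X^\bot\|$-contributions vanishing — gives $\big({\rm dist}(L,L')\big)^2 c\le0$, whence $L'=L$, a contradiction. Therefore $\nu<n+q-1$.

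The step I expect to be the main obstacle is that Theorem~\ref{T-3.29} is stated for \emph{Riemannian} foliations: there the connecting geodesic meets every leaf it crosses orthogonally, which is exactly what makes the index form \eqref{E-2var-w} and the term $K^\top_X(\dot\gamma,\cdot)$ meaningful along all of $\gamma$, whereas for a merely totally geodesic foliation this can fail along the interior of $\gamma$. One closes the gap either by additionally assuming $\calf$ is Riemannian, or by passing to a leaf-internal Jacobi-field argument along a geodesic lying inside the totally geodesic leaf $L$ (a geodesic of $M$ as well), governed by the weighted ${\cal D}^\bot$-Jacobi operator of the Introduction — the approach of Section~\ref{sec:w-Top} — where Lemma~\ref{P-01} and the vanishing of the weighted curvature bound again do the essential work.
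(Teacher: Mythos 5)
Your proof is correct and follows the paper's intended route: the corollary is stated without a separate proof and is meant to be read off from the third case of \eqref{Eq-w-Ric} in Theorem~\ref{T-3.29}, with $\|h_\calf\|=0$ (totally geodesic) and $\|X^\bot\|=0$ ($X$ tangent to the leaves) forcing the distance between nearby distinct leaves to vanish, and your compactness step upgrading positivity ``along some leaf'' to a uniform $c>0$ on a tubular neighborhood is exactly the right way to make that precise. The mismatch you flag --- Theorem~\ref{T-3.29} assumes a \emph{Riemannian} foliation while the corollary assumes only a totally geodesic one --- is a gap inherited from the paper's own formulation rather than a defect of your argument, and since the corollary only requires arbitrarily short connecting geodesics, continuity of the splitting $TM={\cal D}^\top\oplus{\cal D}^\bot$ keeps $\dot\gamma$ and the parallel fields within $o(1)$ of ${\cal D}^\bot$ and ${\cal D}^\top$, so the curvature lower bound applies up to an error vanishing with the length and your contradiction persists.
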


Similar results are true for foliated pseudo-Riemannian manifolds.

\section{The weighted mixed scalar curvature}\label{sec:scal}

Define tensors for one of distributions,
 ${\cal D}^\top$; similar tensors for ${\cal D}^\bot$
are defined using $\,^\bot$ notation.
 Let
 $T^\top, h^\top:{\cal D}^\top\times{\cal D}^\top\to{\cal D}^\bot$
be the integrability tensor and the 2nd fundamental form of~${\cal D}^\top$,
\begin{equation*}
  T^\top(u,v):=(1/2)\,[u,\,v]^\perp,\quad h^\top(u,v): = (1/2)\,(\nabla_u v+\nabla_v u)^\perp.
\end{equation*}
Then $\Ht =\tr_g h^\top$ is the mean curvature vector of ${\cal D}^\top$.
A distribution ${\cal D}^\top$ is called \textit{totally umbilical}, \textit{harmonic}, or \textit{totally geodesic}, if
 $h^\top=\frac1\nu\,\Ht\!\cdot g^\top$, $\Ht =0$ or $h^\top=0$, respectively.

The~Weingarten operator $A^\top$ (of ${\cal D}^\top$)
 and the operator ${T}^{\top\sharp}$ are defined by
\[
 g(A^\top_Z u,v)= g(h^\top(u^\top,v^\top),w^\bot),\quad
 g({T}^{\top\sharp}_w u,v)=g(T^\top(u^\top,v^\top),w^\bot).
\]
The~local adapted orthonormal frame $\{E_a,\,{\cal E}_{i}\}$, where $\{E_a\}\subset{{\cal D}^\top}$, always exists on~$M$.
We~use inner products of tensors, e.g.
\begin{equation*}
 \|h^\top\|^2=\sum\nolimits_{\,i,j} \,g(h^\top({\cal E}_i,{\cal E}_j),h^\top({\cal E}_i,{\cal E}_j)),
 \quad
 \|T^\top\|^2=\sum\nolimits_{\,i,j} \,g(T^\top({\cal E}_i,{\cal E}_j),T^\top({\cal E}_i,{\cal E}_j)).
\end{equation*}

\subsection{Integral formulas}

Integral formulae for foliated manifolds relate extrinsic geometry of the leaves and curvature and provide obstructions for existence of foliations with given geometric properties.
Following ideas of \cite{wa2}, we consider singular distributions, that is those defined outside
a ``singularity set"~$\Sigma$, a~finite union of pairwise disjoint closed submanifolds of codimension $\ge k$
under
assumption that improper integrals $\int_M \|\xi\|^s\,{\rm d}\vol_g$ converge for suitable
vector fields $\xi$ defined on $M\smallsetminus\Sigma$.

\begin{lem}[see \cite{wa2}]\label{L-sing1}
If $(k-1)(s-1)\ge1$ and $\xi$ is a vector field on $M\setminus\Sigma$ such that
$\|\xi\|\in L^s(M,g)$
then \eqref{E-Div-Th} holds.
\end{lem}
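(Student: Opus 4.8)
The statement to prove is Lemma~\ref{L-sing1}, a variant of the divergence theorem for singular distributions: if $(k-1)(s-1)\ge 1$ and $\|\xi\|\in L^s(M,g)$ for a vector field $\xi$ defined on $M\setminus\Sigma$, then the integral formula \eqref{E-Div-Th} (i.e.\ $\int_M \Div\xi\,{\rm d}\vol_g=0$ for closed $M$, or with the appropriate boundary term) continues to hold. The plan is to reduce to the classical divergence theorem by excising a shrinking tubular neighbourhood of the singularity set $\Sigma$ and controlling the flux across its boundary, following \cite{wa2}.

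First I would fix a small $\rho>0$ and let $U_\rho(\Sigma)$ denote the $\rho$-tubular neighbourhood of $\Sigma$, so that $M_\rho:=M\setminus U_\rho(\Sigma)$ is a compact manifold with boundary $\partial M_\rho$ on which $\xi$ is smooth. Applying the classical divergence theorem on $M_\rho$ gives
\[
 \int_{M_\rho}\Div\xi\,{\rm d}\vol_g = \int_{\partial M_\rho} g(\xi,\nu_\rho)\,{\rm d}\vol_{\partial M_\rho},
\]
where $\nu_\rho$ is the outward unit normal. The left-hand side converges to $\int_M\Div\xi\,{\rm d}\vol_g$ as $\rho\to0$ provided $\Div\xi\in L^1(M,g)$, which follows from $\|\xi\|\in L^s$ together with the structure of $\xi$ (typically $\xi$ and its derivatives blow up like a fixed negative power of the distance to $\Sigma$, and the hypothesis $(k-1)(s-1)\ge1$ is exactly what makes the relevant powers integrable against the $(k-1)$-dimensional volume growth of the tubes). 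So the crux is to show the boundary flux $\int_{\partial M_\rho} g(\xi,\nu_\rho)\,{\rm d}\vol_{\partial M_\rho}\to0$.

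For that, I would estimate the boundary term by $\int_{\partial M_\rho}\|\xi\|\,{\rm d}\vol_{\partial M_\rho}$ and use that $\Sigma$ has codimension $\ge k$, so $\vol_{\partial M_\rho}(\partial M_\rho)=O(\rho^{k-1})$. Combining this with Hölder's inequality on the hypersurface,
\[
 \int_{\partial M_\rho}\|\xi\|\,{\rm d}\vol_{\partial M_\rho}
 \le \Big(\int_{\partial M_\rho}\|\xi\|^{s}\,{\rm d}\vol_{\partial M_\rho}\Big)^{1/s}\big(\vol_{\partial M_\rho}(\partial M_\rho)\big)^{1-1/s},
\]
one gets a bound of order $\big(\int_{\partial M_\rho}\|\xi\|^{s}\big)^{1/s}\rho^{(k-1)(1-1/s)}$. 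Since $\|\xi\|\in L^s(M,g)$, the coarea formula (writing the integral over $M$ near $\Sigma$ as an integral over $\rho$ of the hypersurface integrals) forces $\int_{\partial M_\rho}\|\xi\|^s\,{\rm d}\vol_{\partial M_\rho}$ to be integrable in $\rho$, hence to have a sequence $\rho_j\to0$ along which it is $o(1/\rho_j)$; then the exponent condition $(k-1)(s-1)\ge1$, i.e.\ $(k-1)(1-1/s)\ge 1/s$, ensures the product tends to $0$ along this sequence. Thus the boundary flux vanishes in the limit and \eqref{E-Div-Th} follows.

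The main obstacle I expect is making rigorous the passage from ``$\|\xi\|^s$ is integrable on $M$'' to ``the hypersurface integrals decay fast enough in $\rho$'': one cannot simply claim $\int_{\partial M_\rho}\|\xi\|^s=o(\rho^{-1})$ for all $\rho$, only along a subsequence, so the argument must be phrased in terms of choosing such a subsequence $\rho_j\to0$ (and checking $\Div\xi\in L^1$ separately so the left-hand side converges unconditionally). A secondary technical point is justifying the comparison between the intrinsic volume of the tube boundary and $\rho^{k-1}$, which uses compactness of $\Sigma$ and its submanifolds together with the tubular neighbourhood theorem; this is routine but should be stated. Since this lemma is quoted from \cite{wa2}, I would present the above as a sketch and refer there for the full details.
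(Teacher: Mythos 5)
The paper does not prove this lemma at all: it is quoted verbatim from \cite{wa2}, so there is no in-paper argument to compare against. Your sketch is the standard proof from that reference and is correct in outline: excise the $\rho$-tube around $\Sigma$, bound the flux through $\partial M_\rho$ by H\"older against $\vol(\partial M_\rho)=O(\rho^{k-1})$, and use the coarea formula to extract a subsequence $\rho_j\to0$ along which $\rho_j\int_{\partial M_{\rho_j}}\|\xi\|^s\to0$, the exponent condition $(k-1)(1-1/s)\ge 1/s$ then killing the boundary term. You correctly flag the one genuine subtlety, namely that convergence of $\int_M\Div\xi\,{\rm d}\vol_g$ (as an improper integral, or via $\Div\xi\in L^1$) must be assumed separately; the paper itself imposes exactly this in the sentence preceding the lemma, so your treatment matches the intended hypotheses.
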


The Divergence Theorem for a vector field $\xi$ on  $(M,g)$ states that
\begin{equation}\label{E-Div-Th}
 \int_M (\Div \xi)\,{\rm d}\vol_g=0,
\end{equation}
 when either $\xi$ has compact support or $M$ is closed.
 The divergence of the vector field $\Ht+\Hb$ on a Riemannian almost product manifold
was given explicitly in \cite{wa1}:
\begin{equation}\label{E-PW-IF}
 \Div(\Ht+\Hb) = {\rm S}_{\,\rm mix}-\|T^\top\|^2-\|T^\bot\|^2+\|h^\top\|^2 +\|h^\bot\|^2-\|\Ht\|-\|\Hb\|^2.
\end{equation}
The~${\cal D}^\bot$-\textit{divergence} of a vector field $\xi$ is defined by
$\Div^\perp\xi=\sum\nolimits_{i}
g(\nabla_{i}\,\xi, {\cal E}_i)$,
and
we have
\begin{equation}\label{E-Div-DD}
 {\Div}^\top (\xi^\top) = \Div \xi^\top + g(\xi,\,\Hb),\quad
 {\Div}^\bot (\xi^\bot) = \Div \xi^\bot + g(\xi,\,\Ht).
\end{equation}
By \eqref{E-PW-IF} and using
 $\Div(\Hb)=\Div^\top(\Hb)-\|\Hb\|^2$
 and
 $\Div (\Ht)=\Div^\bot (\Ht)-\|\Ht\|^2$,
 we~get
\begin{equation}\label{E-div-IF2}
 \Div^\top(\Hb) +\Div^\bot (\Ht) =
 {\rm S}_{\,\rm mix} +\|h^\bot\|^2 +\|h^\top\|^2 -\|T^\bot\|^2 -\|T^\top\|^2.
\end{equation}

In the next theorems we are based on \eqref{E-PW-IF} and \eqref{E-div-IF2} and extend results in \cite{wa1}.

\begin{thm}
Let $(M,g)$ be a closed
Riemannian manifold endowed with complementary ortho\-gonal distributions ${\cal D}^\top$
and ${\cal D}^\bot$ defined on the complement to the ``set of singularities" $\Sigma$ with ${\rm codim}\,\Sigma\ge k$, and a vector field $X$ such that
$\|\xi\|_g\in L^s(M,g)$, where $\xi=\Hb +\Ht +\frac12\,X$ and  $(k-1)(s-1)\ge1$.
Then for all $N,{\cal N}\ne0$ the following integral formula holds:
\begin{eqnarray*}
 && \int_{M}\!\big\{{\rm S}^{N,{\cal N}}_{{\rm mix},X} -\|T^\top\|^2 -\|T^\bot\|^2 +\|h^\top\|^2 +\|h^\bot\|^2
 -\|\Ht\|^2 -\|\Hb\|^2 \\
 && \qquad -\,\frac1{2N}\,\|X^\top\|^2 -\frac1{2{\cal N}}\,\|X^\bot\|^2 \big\}\,{\rm d}\vol_g = 0 .
\end{eqnarray*}
\end{thm}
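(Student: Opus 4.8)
The plan is to show that the integrand of the asserted formula is a pure divergence, namely $\Div\xi$ with $\xi=\Hb+\Ht+\tfrac12\,X$, and then to invoke Lemma~\ref{L-sing1} to conclude that its integral over the closed manifold $M$ vanishes, even though $\xi$ is defined only on $M\smallsetminus\Sigma$.

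First I would use the pointwise identity \eqref{E-w-Smix-N} to replace ${\rm S}^{N,{\cal N}}_{\,{\rm mix},X}$ in the integrand by ${\rm S}_{\,\rm mix}+\tfrac12\,\Div X+\tfrac1{2N}\|X^\top\|^2+\tfrac1{2{\cal N}}\|X^\bot\|^2$; the hypothesis $N,{\cal N}\ne0$ is exactly what makes these three terms meaningful, and the two $X$-norm terms cancel against the explicit $-\tfrac1{2N}\|X^\top\|^2-\tfrac1{2{\cal N}}\|X^\bot\|^2$ present in the integrand. What remains is
\[
 {\rm S}_{\,\rm mix}+\tfrac12\,\Div X-\|T^\top\|^2-\|T^\bot\|^2+\|h^\top\|^2+\|h^\bot\|^2-\|\Ht\|^2-\|\Hb\|^2 .
\]

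Next I would recognise, via the almost-product integral formula \eqref{E-PW-IF}, that the combination ${\rm S}_{\,\rm mix}-\|T^\top\|^2-\|T^\bot\|^2+\|h^\top\|^2+\|h^\bot\|^2-\|\Ht\|^2-\|\Hb\|^2$ equals $\Div(\Ht+\Hb)$. By linearity of the divergence the whole integrand therefore equals $\Div(\Ht+\Hb)+\tfrac12\,\Div X=\Div(\Ht+\Hb+\tfrac12\,X)=\Div\xi$. Finally, since $M$ is closed, $\xi$ is a vector field on $M\smallsetminus\Sigma$ with ${\rm codim}\,\Sigma\ge k$ and $\|\xi\|_g\in L^s(M,g)$ with $(k-1)(s-1)\ge1$, Lemma~\ref{L-sing1} asserts that the Divergence Theorem \eqref{E-Div-Th} is valid for $\xi$, i.e. $\int_M\Div\xi\,{\rm d}\vol_g=0$, which is precisely the claimed identity.

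The only delicate point is the justification of \eqref{E-Div-Th} near the singularity set, where $\xi$ is not defined; but this is exactly what is delivered by the integrability assumption $\|\xi\|_g\in L^s$ together with $(k-1)(s-1)\ge1$ through Lemma~\ref{L-sing1}. When $\Sigma=\varnothing$ (genuine foliations or regular distributions) no singular analysis is needed and the argument collapses to the classical divergence theorem on a closed manifold; thus the substantive content of the statement is the identification of the integrand with $\Div\xi$ via \eqref{E-w-Smix-N} and \eqref{E-PW-IF}, the remainder being bookkeeping.
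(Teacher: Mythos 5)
Your argument is correct and is exactly the paper's proof (which is stated in one line as following from \eqref{E-PW-IF}, \eqref{E-w-Smix-N} and Lemma~\ref{L-sing1}): you expand ${\rm S}^{N,{\cal N}}_{\,{\rm mix},X}$ via \eqref{E-w-Smix-N}, identify the remaining combination with $\Div(\Ht+\Hb)$ via \eqref{E-PW-IF}, and apply the singular divergence theorem of Lemma~\ref{L-sing1} to $\xi=\Hb+\Ht+\tfrac12X$. No gaps; your write-up simply makes the cancellations explicit.
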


\begin{proof} This follows from \eqref{E-PW-IF}, \eqref{E-w-Smix-N} and Lemma~\ref{L-sing1}.
\end{proof}

\begin{definition}\rm
We say that $(M',g')$ is a \textit{leaf of a distribution} ${\cal D}$ on $(M,g)$ if $M'$ is a submanifold of $M$ with induced metric $g'$ and
$T_mM'={\cal D}_m$ for any $m\in M'$.
\end{definition}

\begin{thm}
Let $(M,g)$ be a closed Riemannian manifold endowed with complementary orthogo\-nal distributions ${\cal D}^\top$ and ${\cal D}^\bot$
with $\Ht=0$,
defined on the complement to the ``set of singularities" $\Sigma$ with ${\rm codim}\,\Sigma\ge k$, and a vector field $X\in\mathfrak{X}^\top$ such that $\|\xi_{\,|M'}\|_g\in L^s(M',g')$ for all leaves $(M',g')$ of ${\cal D}^\top$,
where $\xi=\Hb +\frac12\,X$ and $(k-1)(s-1)\ge1$.  Then for all $N,{\cal N}\ne0$ the following
integral formula
holds:
\begin{equation*}
 \int_{M'}\!\big\{{\rm S}^{N,{\cal N}}_{{\rm mix},X}
 -\|T^\bot\|^2 +\|h^\top\|^2 +\|h^\bot\|^2 +\frac12\,g(X,H^\bot) -\frac1{2N}\,\|X\|^2
  \big\}\,{\rm d}\vol_{g'} = 0 .
\end{equation*}
\end{thm}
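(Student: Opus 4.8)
The plan is to integrate the pointwise identity \eqref{E-div-IF2} over a fixed leaf $(M',g')$ of ${\cal D}^\top$ and convert the two divergence terms into a total ${\cal D}^\top$-divergence plus correction terms coming from $X$. Since $\Ht=0$, the term $\Div^\bot(\Ht)$ vanishes identically, and \eqref{E-div-IF2} reduces on $M'$ to $\Div^\top(H^\bot)={\rm S}_{\,\rm mix}+\|h^\bot\|^2+\|h^\top\|^2-\|T^\bot\|^2-\|T^\top\|^2$; note also that $\|T^\top\|^2=0$ here because $\Ht=0$ forces ${\cal D}^\top$ to be integrable, and its leaves are exactly the $M'$, so on $M'$ the integrability tensor $T^\top$ vanishes and the term $\|T^\top\|^2$ drops out (this is what produces the asymmetry between the $\|T^\bot\|^2$ and $\|T^\top\|^2$ terms in the claimed formula).

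Next I would bring in the vector field $X\in\mathfrak{X}^\top$. The idea is to work with $\xi=H^\bot+\tfrac12X$, restricted to $M'$, and observe that $\Div^\top\xi=\Div^\top(H^\bot)+\tfrac12\Div^\top X$. Because $X$ is tangent to ${\cal D}^\top$ and $M'$ is a leaf of ${\cal D}^\top$, the ${\cal D}^\top$-divergence of $X$ along $M'$ coincides with the intrinsic divergence $\Div_{g'}X_{|M'}$ of $X$ as a vector field on $(M',g')$, up to the term $g(X,H^\bot)$ picked up from the second identity in \eqref{E-Div-DD} (applied on $M'$): $\Div^\top X=\Div_{g'}X_{|M'}$, while the ambient relation contributes the coupling $g(X,H^\bot)$. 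I then want to match the weighted scalar curvature: from \eqref{E-w-Smix-N} with $X^\bot=0$ on a leaf, ${\rm S}^{N,{\cal N}}_{\,{\rm mix},X}={\rm S}_{\,\rm mix}+\tfrac12\Div X+\tfrac1{2N}\|X\|^2$, so replacing ${\rm S}_{\,\rm mix}$ by ${\rm S}^{N,{\cal N}}_{\,{\rm mix},X}-\tfrac12\Div X-\tfrac1{2N}\|X\|^2$ in the reduced \eqref{E-div-IF2} and absorbing the $\tfrac12\Div X$ against the $\tfrac12\Div^\top X$ term turns everything into $\Div_{g'}$ of the single vector field $\xi_{|M'}$ plus the remaining terms $-\tfrac12 g(X,H^\bot)$ and $+\tfrac1{2N}\|X\|^2$ that must be moved across; after rearranging signs one recovers exactly the integrand displayed in the theorem, with the $+\tfrac12 g(X,H^\bot)$ and $-\tfrac1{2N}\|X\|^2$ appearing as stated.

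Finally I would invoke the singular Divergence Theorem. The hypothesis is tailored so that $\|\xi_{|M'}\|_g\in L^s(M',g')$ with $(k-1)(s-1)\ge1$ and ${\rm codim}\,\Sigma\ge k$; by Lemma~\ref{L-sing1} applied on the (closed, since $M$ is closed and leaves of a distribution with $\Ht=0$ on a closed manifold are closed) leaf $M'$, the integral $\int_{M'}\Div_{g'}(\xi_{|M'})\,{\rm d}\vol_{g'}=0$, which kills the total-divergence term and leaves precisely the asserted identity; the role of $N$ is only bookkeeping and ${\cal N}$ does not actually enter because $X^\bot=0$, which is why the formula holds for all $N,{\cal N}\ne0$.

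I expect the main obstacle to be the careful bookkeeping in the second step: one must check that $\Sigma\cap M'$ has the right codimension inside $M'$ so that Lemma~\ref{L-sing1} genuinely applies leaf-by-leaf, and one must be scrupulous that the ${\cal D}^\top$-divergence of $X$ along $M'$ really is the intrinsic $\Div_{g'}$ (this uses $X\in\mathfrak{X}^\top$ and that $M'$ is an integral leaf, so no normal-derivative terms survive), and that the coupling term $g(X,H^\bot)$ comes out with the stated sign and coefficient after combining \eqref{E-div-IF2}, \eqref{E-Div-DD} and \eqref{E-w-Smix-N}. The curvature and extrinsic-geometry terms themselves transcribe directly from the already-proven identities, so the genuinely new content is just this restriction-to-a-leaf argument together with the observation $T^\top\equiv 0$ on $M'$.
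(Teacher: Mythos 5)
Your proposal is correct in substance and follows exactly the route the paper intends: rewrite \eqref{E-div-IF2} with $\Ht=0$, trade ${\rm S}_{\,\rm mix}$ for ${\rm S}^{N,{\cal N}}_{{\rm mix},X}$ via \eqref{E-w-Smix-N} with $X^\bot=0$, convert $\Div X$ into $\Div^\top X-g(X,\Hb)$ via the first identity of \eqref{E-Div-DD} (not the second, as you wrote), and apply Lemma~\ref{L-sing1} leafwise to the tangent field $\xi=\Hb+\frac12\,X$; your sign bookkeeping for $+\frac12\,g(X,\Hb)$ and $-\frac1{2N}\|X\|^2$ checks out. Two of your side-justifications are false, though neither damages the argument. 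First, $\Ht=0$ (harmonicity) does not force ${\cal D}^\top$ to be integrable --- integrability is governed by $T^\top$, not by the trace of $h^\top$; the correct reason $\|T^\top\|^2$ drops out is the one you also give, namely that $M'$ is by definition an integral submanifold of ${\cal D}^\top$, so $[u,v]^\bot=0$ along $M'$ for sections $u,v$ of ${\cal D}^\top$. Second, leaves of a harmonic distribution on a closed manifold need not be closed (an irrational linear foliation of the flat torus is totally geodesic, hence harmonic, with dense leaves); compactness of $M'$ is not available in general, and the $L^s$ hypothesis on $\xi_{|M'}$ is precisely what is meant to stand in for it when invoking the singular divergence theorem on the leaf --- a point the paper itself leaves implicit, as it does the codimension of $\Sigma\cap M'$ inside $M'$ that you rightly flag.
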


\begin{proof} This follows from \eqref{E-div-IF2}, \eqref{E-w-Smix-N} and Lemma~\ref{L-sing1}.
\end{proof}

\subsection{Splitting of almost product manifolds}
\label{sec:scal2}

Applying S.T.Yau version of Stokes' theorem on a complete open
$(M,g)$ yields the following.

\begin{lem}[see Proposition~1 in \cite{csc2010}]\label{L-Div-1}
 Let $(M,g)$ be a complete open Riemannian manifold endowed with a vector field $\xi$
 such that $\Div\xi\ge0$. If the norm $\|\xi\|_g\in L^1(M,g)$ then $\Div \xi\equiv0$.
\end{lem}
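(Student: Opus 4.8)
The plan is to use S.\,T.\,Yau's cutoff-function technique, which is the mechanism behind Lemma~\ref{L-Div-1} (and the content of Proposition~1 in \cite{csc2010}). Since $(M,g)$ is complete, by the Hopf--Rinow theorem every closed metric ball $\bar B(o,r)$ about a fixed point $o\in M$ is compact. First I would fix a smooth $\psi:[0,\infty)\to[0,1]$ with $\psi\equiv1$ on $[0,1]$, $\psi\equiv0$ on $[2,\infty)$ and $|\psi'|\le2$, and put $\phi_k(x)=\psi\big(d(x,o)/k\big)$ for $k\in\NN$. Each $\phi_k$ is Lipschitz, equals $1$ on $B(o,k)$, is supported in the compact set $\bar B(o,2k)$, satisfies $0\le\phi_k\le\phi_{k+1}\le1$ with $\phi_k\uparrow1$ pointwise, and obeys $|\nabla\phi_k|\le 2/k$ almost everywhere (the distance function is $1$-Lipschitz, hence differentiable a.e.\ with $|\nabla d(\cdot,o)|\le1$).

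Next I would combine the identity $\Div(\phi_k\,\xi)=\phi_k\,\Div\xi+g(\nabla\phi_k,\xi)$ with the Divergence Theorem \eqref{E-Div-Th} applied to the compactly supported Lipschitz field $\phi_k\,\xi$, obtaining
\[
 \int_M \phi_k\,\Div\xi\;{\rm d}\vol_g = -\int_M g(\nabla\phi_k,\xi)\;{\rm d}\vol_g .
\]
For the right-hand side, using $|\nabla\phi_k|\le 2/k$ and ${\rm supp}\,\nabla\phi_k\subset\bar B(o,2k)\setminus B(o,k)$,
\[
 \Big|\int_M g(\nabla\phi_k,\xi)\;{\rm d}\vol_g\Big|
 \le\frac2k\int_{\bar B(o,2k)\setminus B(o,k)}\!\!\|\xi\|_g\;{\rm d}\vol_g
 \le\frac2k\,\|\xi\|_{L^1(M,g)}\ \longrightarrow\ 0\quad(k\to\infty),
\]
which is where the hypothesis $\|\xi\|_g\in L^1(M,g)$ enters. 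For the left-hand side, $\phi_k\,\Div\xi\ge0$ (since $\Div\xi\ge0$) and it increases pointwise to $\Div\xi$, so the Monotone Convergence Theorem gives $\int_M\phi_k\,\Div\xi\,{\rm d}\vol_g\to\int_M\Div\xi\,{\rm d}\vol_g$.

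Passing to the limit in the displayed identity yields $\int_M\Div\xi\,{\rm d}\vol_g=0$; as $\Div\xi$ is a nonnegative continuous function, this forces $\Div\xi\equiv0$. The one genuinely delicate step, I expect, is showing that the ``boundary term'' $\int_M g(\nabla\phi_k,\xi)\,{\rm d}\vol_g$ vanishes in the limit: this requires completeness (so that the balls are compact and the cutoffs legitimate), the $O(1/k)$ gradient decay of the $\phi_k$, and the $L^1$-integrability of $\xi$ to work together simultaneously — whereas the remaining ingredients (monotone convergence, and integration by parts for a Lipschitz cutoff against a $C^1$ vector field) are routine.
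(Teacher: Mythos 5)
Your argument is correct and coincides with the approach the paper intends: the lemma is quoted from Proposition~1 of \cite{csc2010}, whose proof is precisely S.\,T.\,Yau's cutoff/exhaustion version of Stokes' theorem that you reconstruct (compact balls by Hopf--Rinow, cutoffs with $O(1/k)$ gradients, the $L^1$ hypothesis killing the boundary term, and monotone convergence plus $\Div\xi\ge0$ forcing $\int_M\Div\xi\,{\rm d}\vol_g=0$). The only point worth flagging --- which you already acknowledge as routine --- is that $d(\cdot,o)$ is merely Lipschitz, so one should either smooth the cutoffs or invoke the divergence theorem for compactly supported Lipschitz fields.
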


\noindent
In~Section~\ref{sec:scal2} we
update some splitting theorems
to the case of
almost-product manifolds of
nonnegative/nonpositive weighted mixed scalar curvature.

\subsubsection{Harmonic distributions}

\begin{thm}
Let $(M,g)$ be a complete open {\rm (}or closed{\rm )} Riemannian manifold
endowed with complementary
orthogo\-nal integrable distributions $({\cal D}^\top,{\cal D}^\bot)$
and a vector field $X\in\mathfrak{X}^\top$ obeying conditions
$g(X,H^\bot)=0$ and $\|X_{\,|M'}\|_{g'}\in L^1(M',g')$
for all leaves $(M',g')$ of ${\cal D}^\top$.
Suppose that ${\cal D}^\top$ is harmonic and $\,{\rm S}^{N,{\cal N}}_{\,{\rm mix},X}\ge0$ for some $N<0$ and ${\cal N}\ne0$.
Then $M$~splits and $X=0$.
\end{thm}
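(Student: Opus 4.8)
The plan is to pass to a single leaf of the foliation tangent to ${\cal D}^\top$, convert the curvature hypothesis into a pointwise divergence inequality on that leaf, and then force the inequality to be an equality by a divergence theorem.

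First I would record the structural consequences of the hypotheses. Integrability of ${\cal D}^\top$ and ${\cal D}^\bot$ gives $T^\top=T^\bot=0$; harmonicity of ${\cal D}^\top$ gives $\Ht=0$; and $X\in\mathfrak{X}^\top$ gives $X^\bot=0$, so \eqref{E-w-Smix-N} collapses to ${\rm S}^{N,{\cal N}}_{\,{\rm mix},X}={\rm S}_{\,\rm mix}+\tfrac12\Div X+\tfrac1{2N}\|X\|^2$ — note the ${\cal N}$-term disappears, which is why ${\cal N}\ne0$ is the only requirement on it. Fix a leaf $(M',g')$ of ${\cal D}^\top$. The vector field $\xi:=\Hb+\tfrac12 X$ lies in $\mathfrak{X}^\top$, hence is tangent to $M'$, and on $M'$ the operator $\Div^\top$ is exactly the intrinsic divergence $\Div_{g'}$.

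Next, restrict \eqref{E-div-IF2} to $M'$: since $T^\top=T^\bot=0$ and $\Div^\bot\Ht=0$, it becomes $\Div^\top\Hb={\rm S}_{\,\rm mix}+\|h^\top\|^2+\|h^\bot\|^2$; and \eqref{E-Div-DD} together with $g(X,\Hb)=0$ gives $\Div^\top X=\Div X$. Adding $\tfrac12\Div^\top X$ and substituting the formula for ${\rm S}^{N,{\cal N}}_{\,{\rm mix},X}$ above, I get on $M'$
\[
 \Div_{g'}\xi = {\rm S}^{N,{\cal N}}_{\,{\rm mix},X} - \frac1{2N}\,\|X\|^2 + \|h^\top\|^2 + \|h^\bot\|^2 .
\]
Because $N<0$ we have $-\tfrac1{2N}>0$, and ${\rm S}^{N,{\cal N}}_{\,{\rm mix},X}\ge0$ by assumption, so the right-hand side is a sum of four nonnegative terms; in particular $\Div_{g'}\xi\ge0$ throughout $M'$. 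Applying the divergence theorem on the complete manifold $M'$ — Lemma~\ref{L-Div-1} in the open case, the classical one when $(M,g)$, hence $M'$, is closed — and using $\|\xi_{\,|M'}\|_{g'}\in L^1(M',g')$, I conclude $\Div_{g'}\xi\equiv0$ on $M'$. Then each summand vanishes: $X=0$ and $h^\top=h^\bot=0$ along $M'$ (and ${\rm S}^{N,{\cal N}}_{\,{\rm mix},X}=0$). Since the leaves of ${\cal D}^\top$ cover $M$, this gives $X\equiv0$ on $M$ and $h^\top\equiv h^\bot\equiv0$, so both complementary distributions are integrable with totally geodesic leaves; by the de Rham decomposition theorem $(M,g)$ then splits as a Riemannian product.

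The main obstacle is the legitimacy of that last divergence-theorem step. Lemma~\ref{L-Div-1} needs $\|\xi_{\,|M'}\|_{g'}\in L^1(M',g')$ for $\xi=\Hb+\tfrac12 X$, whereas the stated hypothesis only provides $\|X_{\,|M'}\|_{g'}\in L^1(M',g')$; so in the open case one must also control $\|\Hb_{\,|M'}\|_{g'}$ (this is automatic when $(M,g)$ is closed and $M'$ is compact, and more generally when the leaves have finite volume). Granting $\|\xi_{\,|M'}\|_{g'}\in L^1$, everything else is routine; the only other point worth care is the bookkeeping that restricting \eqref{E-div-IF2} to a leaf turns $\Div^\top$ into $\Div_{g'}$ and that no cross terms survive precisely because $T^\top=T^\bot=0=\Ht$.
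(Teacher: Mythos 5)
Your argument is correct and coincides with the paper's own proof: both reduce the hypotheses via \eqref{E-w-Smix-N}, \eqref{E-Div-DD} and \eqref{E-div-IF2} to the leafwise identity $\Div^\top(\Hb+\frac12\,X)={\rm S}^{N,{\cal N}}_{\,{\rm mix},X}+\|h^\bot\|^2+\|h^\top\|^2-\frac1{2N}\|X\|^2$ and then apply Lemma~\ref{L-Div-1} on each leaf to force every term on the right (each nonnegative, since $N<0$) to vanish. The integrability caveat you raise --- that Lemma~\ref{L-Div-1} requires $\|\Hb+\frac12\,X\|_{g'}\in L^1(M',g')$ whereas the statement only assumes this for $X$ --- is a genuine defect of the theorem's hypotheses in the noncompact case, and it is present in the paper's own proof as well.
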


\begin{proof}
By conditions and \eqref{E-w-Smix-N}, \eqref{E-Div-DD} and \eqref{E-div-IF2}, we have
\begin{equation}\label{E-th4}
 \Div^\top (\,H^\bot+\frac12\,X) = {\rm S}^{N,{\cal N}}_{\,{\rm mix},X} +\|h^\bot\|^2 +\|h^\top\|^2
 -\frac1{2N}\,\|X\|^2.
\end{equation}
Applying Lemma~\ref{L-Div-1} to each leaf,
we get $\Div^\top (\,H^\bot+\frac12\,X)$
when ${\rm S}^{N,{\cal N}}_{\,{\rm mix},X}\ge0$  and $N<0$;
thus, $h^\top=0=h^\bot$ and $X=0$. By~de Rham decomposition theorem, $(M,g)$ splits.
\end{proof}

\begin{corollary}\label{C-w2}
Let $(M,g,X)$ be
endowed with two complementary orthogo\-nal integrable distributions $({\cal D}^\top,{\cal D}^\bot)$
 and a vector field $X\in\mathfrak{X}^\top$.
Suppose that
${\cal D}^\bot$ is harmonic.
Then~${\cal D}^\top$ has no compact harmonic leaves $M'$ with ${\rm S}^{N,{\cal N}}_{\,{\rm mix},X\,|M'}>0$ for some $N<0$ and ${\cal N}\ne0$.
\end{corollary}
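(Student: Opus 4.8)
The plan is to argue by contradiction, promoting the identity \eqref{E-th4} to an integral identity over a hypothetical compact harmonic leaf and reading off a sign contradiction. So suppose $M'\subset M$ were a compact harmonic leaf of ${\cal D}^\top$ with ${\rm S}^{N,{\cal N}}_{{\rm mix},X\,|M'}>0$ for some $N<0$ and ${\cal N}\ne0$.

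First I would record the reductions forced by the hypotheses: $X\in\mathfrak{X}^\top$, so $X^\bot=0$; $({\cal D}^\top,{\cal D}^\bot)$ are integrable, so $T^\top=T^\bot=0$; ${\cal D}^\bot$ is harmonic, so $H^\bot\equiv0$ and hence $g(X,H^\bot)=0$ (the side condition imposed by hand in the preceding theorem is now automatic); and $H^\top$ vanishes along the harmonic leaf $M'$. I would then rerun, localized to $M'$, the computation that produced \eqref{E-th4}: \eqref{E-w-Smix-N} with $X^\bot=0$ gives $\tfrac12\Div X={\rm S}^{N,{\cal N}}_{{\rm mix},X}-{\rm S}_{\rm mix}-\tfrac1{2N}\|X\|^2$; \eqref{E-div-IF2}, together with $\Div^\top H^\bot=-g(H^\bot,H^\top)=0$, expresses ${\rm S}_{\rm mix}$ through $\|h^\top\|^2$, $\|h^\bot\|^2$ and $\Div^\bot H^\top$; and \eqref{E-Div-DD} gives $\Div^\top X=\Div X$. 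Along $M'$, where $H^\top=0$, these should combine into
\[
 \Div^\top(\tfrac12 X)|_{M'}={\rm S}^{N,{\cal N}}_{{\rm mix},X}+\|h^\top\|^2+\|h^\bot\|^2-\tfrac1{2N}\|X\|^2 .
\]
Since $\tfrac12 X$ is tangent to the leaf $M'$, the left-hand side is the intrinsic divergence $\Div'(\tfrac12 X)$ on the closed manifold $(M',g')$, whose integral over $M'$ is zero. Integrating the identity over $M'$ would then give $\int_{M'}\big({\rm S}^{N,{\cal N}}_{{\rm mix},X}+\|h^\top\|^2+\|h^\bot\|^2-\tfrac1{2N}\|X\|^2\big)\,{\rm d}\vol_{g'}=0$, while for $N<0$ every summand is nonnegative and ${\rm S}^{N,{\cal N}}_{{\rm mix},X}>0$ on $M'$, so the integrand is strictly positive -- the required contradiction.

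The step I expect to be the real obstacle is obtaining the displayed identity exactly. In the preceding theorem the term $\Div^\bot H^\top$ was annihilated by the global condition $H^\top\equiv0$, whereas here only $H^\top|_{M'}=0$, so one must check that its leftover contribution does not spoil the positivity after integration -- concretely, that $\int_{M'}\Div^\bot H^\top\,{\rm d}\vol_{g'}\le0$ (this integral equals $\int_{M'}\Div H^\top\,{\rm d}\vol_{g'}$, since $\|H^\top\|^2=0$ on $M'$). The natural leverage is that the smooth nonnegative function $\|H^\top\|^2$ attains its minimum $0$ precisely along $M'$, so its derivatives transverse to $M'$ vanish there; converting this into the needed sign for $\Div^\bot H^\top$ along $M'$ -- or else reading ``harmonic leaf'' in the stronger sense that $H^\top$ vanishes on a whole ${\cal D}^\bot$-saturated neighbourhood of $M'$ -- is what the argument hinges on. The remaining bookkeeping is the routine positivity estimate above.
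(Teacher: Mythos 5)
Your reconstruction of the intended argument is the right one (the paper in fact prints no proof for this corollary; it is evidently meant to follow from the identity \eqref{E-th4} of the preceding theorem by integrating over the closed leaf), and you have correctly put your finger on the crux: when $\Ht$ vanishes only \emph{along} $M'$ rather than identically, the term $\Div^\bot\Ht$ in \eqref{E-div-IF2} survives and enters the leafwise identity with no sign. But the fix you propose does not close the gap. The function $\|\Ht\|^2$ is quadratic in $\Ht$, so $d\,\|\Ht\|^2=2\,g(\nabla\Ht,\Ht)$ vanishes automatically at every point where $\Ht=0$, and the second-order information at a minimum only gives $\sum_v\|\nabla_v\Ht\|^2\ge0$, which is again vacuous; neither controls $\Div^\bot\Ht=\sum_i g(\nabla_{{\cal E}_i}\Ht,{\cal E}_i)$, which is \emph{linear} in the transverse derivative of $\Ht$ and can have either sign on the zero set of $\Ht$.

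Moreover, the obstruction is genuine, not just a missing estimate: with the literal reading of ``harmonic leaf'' (i.e. $\Ht|_{M'}=0$) the statement fails. Take $M=T^2\times\RR$ with $g=e^{2a(t)}dx^2+e^{2b(t)}dy^2+dt^2$, $a(t)=t-2t^2$, $b(t)=-t$, ${\cal D}^\top=T(T^2)$, ${\cal D}^\bot=\RR\,\partial_t$, $X=0$. Both distributions are integrable, ${\cal D}^\bot$ is totally geodesic (hence harmonic), and the leaf $M'=T^2\times\{0\}$ is compact and minimal since $\dot a(0)+\dot b(0)=0$; yet ${\rm S}_{\rm mix}=-(\ddot a+\dot a^2)-(\ddot b+\dot b^2)=3-1=2>0$ on $M'$, so ${\rm S}^{N,{\cal N}}_{{\rm mix},X}>0$ there for every $N,{\cal N}$. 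Consistently, $\Div^\bot\Ht=4>0$ on $M'$, which is exactly the term your argument cannot discard. So the corollary requires the stronger hypothesis you mention as a fallback --- that $\Ht$ vanish on a ${\cal D}^\bot$-saturated neighbourhood of $M'$ (or that ${\cal D}^\top$ be harmonic globally, as in the theorem it is derived from, with ``${\cal D}^\bot$ harmonic'' then serving only to guarantee $g(X,\Hb)=0$); under that reading your integration of $\Div^\top(\Hb+\frac12X)=\Div'(\frac12X)$ over the closed leaf, together with $N<0$, does yield the contradiction.
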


\begin{thm}
Let $(M,g)$ be a closed or a complete open Riemannian manifold
endowed with complementary orthogonal harmonic foliations
and a vector field $X$ such that $\|X\|_g\in L^1(M,g)$.
Suppose that $\,{\rm S}^{N,{\cal N}}_{\,{\rm mix},X}\ge0$ for some $N,{\cal N}<0$.
Then $M$~splits and $X=0$.
\end{thm}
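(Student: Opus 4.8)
The plan is to mimic the proof of the previous harmonic-distributions theorem, but now exploiting symmetry in the two complementary harmonic foliations and using the \emph{full} divergence formula \eqref{E-div-IF2} rather than a leafwise one. First I would record the relevant identities. Since both ${\cal D}^\top$ and ${\cal D}^\bot$ are harmonic foliations, we have $\Ht=0$, $\Hb=0$, and $T^\top=0$, $T^\bot=0$ (integrability kills the integrability tensors). Plugging these into \eqref{E-div-IF2} gives $\Div^\top(\Hb)+\Div^\bot(\Ht)=0$, so in particular ${\rm S}_{\,\rm mix}+\|h^\bot\|^2+\|h^\top\|^2=0$; but I will instead keep the vector field $X$ in play. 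Write $\xi=\frac12 X$ and decompose $X=X^\top+X^\bot$. Using \eqref{E-Div-DD} with $\Ht=\Hb=0$ we get $\Div X = \Div^\top(X^\top)+\Div^\bot(X^\bot)$, i.e. $\Div(\tfrac12 X)=\tfrac12\Div^\top(X^\top)+\tfrac12\Div^\bot(X^\bot)$.

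Next I would combine this with the definition \eqref{E-w-Smix-N} of ${\rm S}^{N,{\cal N}}_{\,{\rm mix},X}$ and the vanishing of $T^\top,T^\bot,\Ht,\Hb$ to produce a single global divergence identity of the shape
\begin{equation*}
 \Div\big(\tfrac12 X\big) = {\rm S}^{N,{\cal N}}_{\,{\rm mix},X} + \|h^\top\|^2 + \|h^\bot\|^2 - \frac1{2N}\,\|X^\top\|^2 - \frac1{2{\cal N}}\,\|X^\bot\|^2,
\end{equation*}
obtained by subtracting \eqref{E-div-IF2} (with the tensor terms now zero except the $h$-terms, which actually contribute on the left of \eqref{E-div-IF2}) — more carefully, one uses \eqref{E-PW-IF} directly: with $\Ht=\Hb=0$ and $T^\top=T^\bot=0$, \eqref{E-PW-IF} says $0=\Div(\Ht+\Hb)={\rm S}_{\,\rm mix}+\|h^\top\|^2+\|h^\bot\|^2$, so ${\rm S}_{\,\rm mix}=-\|h^\top\|^2-\|h^\bot\|^2$, and then \eqref{E-w-Smix-N} gives ${\rm S}^{N,{\cal N}}_{\,{\rm mix},X}=-\|h^\top\|^2-\|h^\bot\|^2+\tfrac12\Div X+\tfrac1{2N}\|X^\top\|^2+\tfrac1{2{\cal N}}\|X^\bot\|^2$, i.e.
\begin{equation*}
 \Div\big(\tfrac12 X\big) = {\rm S}^{N,{\cal N}}_{\,{\rm mix},X} + \|h^\top\|^2 + \|h^\bot\|^2 - \frac1{2N}\,\|X^\top\|^2 - \frac1{2{\cal N}}\,\|X^\bot\|^2 .
\end{equation*}
Now since $N<0$ and ${\cal N}<0$, the terms $-\tfrac1{2N}\|X^\top\|^2$ and $-\tfrac1{2{\cal N}}\|X^\bot\|^2$ are $\ge0$, and together with ${\rm S}^{N,{\cal N}}_{\,{\rm mix},X}\ge0$ this forces $\Div(\tfrac12 X)\ge0$ pointwise.

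Then I would invoke the integrability hypothesis: in the closed case \eqref{E-Div-Th} applies to $\tfrac12 X$ directly; in the complete open case $\|X\|_g\in L^1(M,g)$ lets us apply Lemma~\ref{L-Div-1} to $\xi=\tfrac12 X$. Either way $\Div(\tfrac12 X)\equiv0$, and since the right-hand side above is a sum of nonnegative terms each must vanish: $\|h^\top\|^2=\|h^\bot\|^2=0$, $\|X^\top\|^2=\|X^\bot\|^2=0$ (using $N,{\cal N}<0$ so the coefficients are strictly negative), and ${\rm S}^{N,{\cal N}}_{\,{\rm mix},X}\equiv0$. Hence $X=0$ and both distributions are totally geodesic and integrable, so by the de Rham decomposition theorem $(M,g)$ splits locally, and globally under the usual completeness/simple-connectedness reading in which such splitting theorems are stated. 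The main obstacle I anticipate is purely bookkeeping: making sure the harmonic-foliation hypotheses really do kill all of $T^\top,T^\bot,\Ht,\Hb$ simultaneously so that \eqref{E-PW-IF} collapses to the clean identity above, and confirming the sign conventions in \eqref{E-w-Smix-N} so that the $-\tfrac1{2N}$ and $-\tfrac1{2{\cal N}}$ coefficients genuinely have the right sign when $N,{\cal N}<0$; the analytic input (Lemma~\ref{L-Div-1} or the Divergence Theorem) is then immediate.
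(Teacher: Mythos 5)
Your proposal is correct and follows essentially the same route as the paper: both use \eqref{E-PW-IF} with the harmonicity and integrability hypotheses killing the $\Ht,\Hb,T^\top,T^\bot$ terms to arrive at the identity $\frac12\Div X = {\rm S}^{N,{\cal N}}_{\,{\rm mix},X} +\|h^\bot\|^2 +\|h^\top\|^2 -\frac1{2N}\|X^\top\|^2 -\frac1{2{\cal N}}\|X^\bot\|^2$, then apply Lemma~\ref{L-Div-1} (or the Divergence Theorem in the closed case) and the sign conditions $N,{\cal N}<0$ to force every term on the right to vanish, concluding with de Rham. Your write-up is in fact more careful than the paper's about the bookkeeping that collapses \eqref{E-PW-IF}.
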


\begin{proof}
Under conditions, from \eqref{E-PW-IF} we obtain
\begin{equation*}
 \frac12\Div X = {\rm S}^{N,{\cal N}}_{\,{\rm mix},X} +\|h^\bot\|^2 +\|h^\top\|^2
 -\frac1{2N}\,\|X^\top\|^2 -\frac1{2{\cal N}}\,\|X^\bot\|^2.
\end{equation*}
By Lemma~\ref{L-Div-1},
we get $\Div X=0$ when ${\rm S}^{N,{\cal N}}_{\,{\rm mix},X}\ge0$ and $N,{\cal N}<0$.
Thus, $h^\top=0=h^\bot$ and $X=0$. By de Rham decomposition theorem, $(M,g)$ splits.
\end{proof}

\subsubsection{Totally umbilical distributions}

If ${\cal D}^\bot$ is totally umbilical then $\|h^\bot\|^2-\|\Hb\|^2=-\frac{n-1}{n}\,\|\Hb\|^2$,
and similarly, for ${\cal D}^\top$.

\begin{thm}\label{C-Step3} Let $(M,g)$ be a closed (or a complete open) Riemannian manifold
endowed with complementary orthogonal totally umbilical distributions
${\cal D}^\top$ and ${\cal D}^\bot$ and a vector field $X$ obeying
$\|\xi_{|M}\|_{g}{\in} L^1(M,g)$, where $\xi={H}^\bot + {H}^\top + \frac12\,X$.
Suppose that $\,{\rm S}^{N,{\cal N}}_{\,{\rm mix},X}\le0$ for some $N,{\cal N}>0$.
Then $M$~splits and $X=0$.
\end{thm}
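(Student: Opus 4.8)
The plan is to turn the divergence formula \eqref{E-PW-IF} into a pointwise expression for $\Div\xi$, where $\xi={H}^\bot+{H}^\top+\frac12\,X$ is precisely the field occurring in the $L^1$ hypothesis, and whose right-hand side is nonpositive under the assumptions. Concretely, I would first use total umbilicity of ${\cal D}^\top$ and ${\cal D}^\bot$, namely $\|h^\top\|^2-\|{H}^\top\|^2=-\frac{\nu-1}{\nu}\,\|{H}^\top\|^2$ and $\|h^\bot\|^2-\|{H}^\bot\|^2=-\frac{n-1}{n}\,\|{H}^\bot\|^2$ (stated just before the theorem), and then substitute ${\rm S}_{\,\rm mix}={\rm S}^{N,{\cal N}}_{\,{\rm mix},X}-\frac12\,\Div X-\frac1{2N}\,\|X^\top\|^2-\frac1{2{\cal N}}\,\|X^\bot\|^2$ from \eqref{E-w-Smix-N}, so that the term $\frac12\,\Div X$ merges with $\Div({H}^\top+{H}^\bot)$ into $\Div\xi$. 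This yields
\[
 \Div\xi={\rm S}^{N,{\cal N}}_{\,{\rm mix},X}-\|T^\top\|^2-\|T^\bot\|^2-\frac{\nu-1}{\nu}\,\|{H}^\top\|^2-\frac{n-1}{n}\,\|{H}^\bot\|^2-\frac1{2N}\,\|X^\top\|^2-\frac1{2{\cal N}}\,\|X^\bot\|^2 ,
\]
and since $N,{\cal N}>0$ and ${\rm S}^{N,{\cal N}}_{\,{\rm mix},X}\le0$ by hypothesis, every term on the right is $\le0$, so $\Div(-\xi)\ge0$ everywhere.

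Next I would derive $\Div\xi\equiv0$: in the complete open case apply Lemma~\ref{L-Div-1} to $-\xi$, which is legitimate because $\|{-}\xi\|_g=\|\xi\|_g\in L^1(M,g)$; in the closed case it follows at once from the Divergence Theorem \eqref{E-Div-Th}, the integrand being nonpositive. Feeding $\Div\xi=0$ back into the displayed identity forces each nonnegative summand to vanish separately: $T^\top=T^\bot=0$, so both distributions are integrable; $X^\top=X^\bot=0$, so $X=0$; and, provided $\min(n,\nu)\ge2$, ${H}^\top={H}^\bot=0$, which combined with total umbilicity gives $h^\top=h^\bot=0$. Thus both distributions are integrable and totally geodesic, hence $\nabla$-parallel, and $(M,g)$---or its universal cover in the complete open case---splits as a Riemannian product by the de Rham decomposition theorem; in particular $X=0$.

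The algebraic passage from \eqref{E-PW-IF} to the signed divergence identity for $\xi$ is routine bookkeeping; the point I would watch most carefully is a degenerate rank $n=1$ or $\nu=1$, where the coefficient $\frac{n-1}{n}$ (resp.\ $\frac{\nu-1}{\nu}$) vanishes and the argument does not by itself yield ${H}^\bot=0$ (resp.\ ${H}^\top=0$). In that situation the corresponding distribution is a line field, automatically totally umbilical, and one either restricts to $\min(n,\nu)\ge2$---the case relevant to a nontrivial product splitting---or supplies a short separate argument that $\Div\xi=0$ together with $X=0$ and the vanishing of the remaining curvature and torsion terms still forces the mean curvature of that line field to be zero.
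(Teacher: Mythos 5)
Your argument is correct and follows essentially the same route as the paper: you derive the identity $\Div\xi={\rm S}^{N,{\cal N}}_{\,{\rm mix},X}-\|T^\top\|^2-\|T^\bot\|^2-\frac{n-1}{n}\|\Hb\|^2-\frac{\nu-1}{\nu}\|\Ht\|^2-\frac1{2N}\|X^\top\|^2-\frac1{2{\cal N}}\|X^\bot\|^2$ (the paper's \eqref{E-umb-C5}), apply Lemma~\ref{L-Div-1} (to $-\xi$) or the Divergence Theorem to force $\Div\xi\equiv0$, and conclude vanishing of all terms plus a de Rham splitting. Your extra remark about the degenerate ranks $n=1$ or $\nu=1$, where the umbilicity coefficient vanishes and $\Hb$ (resp.\ $\Ht$) is not directly controlled, is a legitimate point of care that the paper's own proof silently skips.
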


\begin{proof}
 Under conditions, from \eqref{E-PW-IF} we~get
\begin{equation}\label{E-umb-C5}
 \Div \xi = {\rm S}^{N,{\cal N}}_{\,{\rm mix},X}-\|T^\top\|^2 -\|T^\bot\|^2 -\frac{n{-}1}{n}\,\|\Hb\|^2 -\frac{\nu{-}1}{\nu}\,\|\Ht\|^2
 -\frac1{2N}\,\|X^\top\|^2 -\frac1{2{\cal N}}\,\|X^\bot\|^2.
\end{equation}
From \eqref{E-umb-C5} and Lemma~\ref{L-Div-1} and since ${\rm S}^{N,{\cal
 N}}_{\,{\rm mix},X}\le0$ for $N,{\cal N}>0$, we get $\Div\xi=0$.
The above yields vanishing of $T^\top,T^\bot,\Ht,\Hb$ and $X$. By de Rham decomposition theorem, $(M,g)$ splits.
\end{proof}

Umbilical integrable distributions appear on double-twisted products, see \cite{pr}.

\begin{definition}\rm
A \textit{doubly-twisted product} $B\times_{(v,u)} F$ of Riemannian manifolds $(B,g_B)$
and $(F, g_F)$ is a manifold $M=B\times F$ with metric $g = g^\top + g^\perp$, where
\begin{equation*}
 g^\top(X,Y) = v^2 g_B(X^\top,Y^\top),\quad g^\bot(X,Y)=u^2 g_F(X^\bot,Y^\bot),
\end{equation*}
and the warping functions $u,v\in C^\infty(M)$ are positive.
\end{definition}

Let ${\cal D}^\top$ be tangent to the \textit{fibers} $\{x\}\times F$ and ${\cal D}^\bot$
tangent to the \textit{leaves} $B\times\{y\}$. The second fundamental forms
and the mean curvature vectors of $B\times_{(v,u)} F$ are given by, see \cite{pr},
\begin{equation*}
  h^\bot = -\nabla^\top(\log u)\,g^\bot,\quad h^\top=-\nabla^\bot(\log v)\,g^\top,\quad
  \Hb=-n\,\nabla^\top(\log u),\quad \Ht=-\nu\,\nabla^\bot(\log v).
\end{equation*}
Thus, the {leaves} and the {fibers} of $B\times_{(v,u)} F$ are totally umbilical
with respect to $\bar\nabla$ and $\nabla$.

\begin{corollary}[of Theorem~\ref{C-Step3}]
Let $M=B\times_{(v,u)} F$ be complete closed (or complete open) and
there is a vector field $X$ such that
$\|\xi_{|M}\|_{g}{\in} L^1(M,g)$, where $\xi={H}^\bot + {H}^\top +\frac12\,X$.
Suppose that $\,{\rm S}^{N,{\cal N}}_{\,{\rm mix},X}\le0$ for some $N,{\cal N}>0$.
Then $M$~is the product and $X=0$.
\end{corollary}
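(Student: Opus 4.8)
The plan is to derive this corollary directly from Theorem~\ref{C-Step3} by verifying that a doubly-twisted product $M=B\times_{(v,u)} F$ fits into the hypotheses of that theorem and then identifying what the conclusion ``$M$ splits'' means concretely in this setting. First I would recall from the explicit formulas preceding the statement that the leaves $B\times\{y\}$ and the fibers $\{x\}\times F$ of $B\times_{(v,u)} F$ are totally umbilical, with $h^\bot = -\nabla^\top(\log u)\,g^\bot$ and $h^\top=-\nabla^\bot(\log v)\,g^\top$; hence the complementary orthogonal distributions ${\cal D}^\top$ and ${\cal D}^\bot$ (tangent to the fibers and to the leaves, respectively) are exactly a pair of complementary orthogonal totally umbilical distributions, which is the structural hypothesis of Theorem~\ref{C-Step3}. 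The integrability of both is automatic here (they are tangent to the product factors), so $T^\top=T^\bot=0$ from the outset, but that is not needed as an input.

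Next I would feed the remaining data into Theorem~\ref{C-Step3}: by assumption there is a vector field $X$ with $\|\xi_{|M}\|_g\in L^1(M,g)$ for $\xi = H^\bot + H^\top + \tfrac12 X$, and ${\rm S}^{N,{\cal N}}_{\,{\rm mix},X}\le 0$ for some $N,{\cal N}>0$, and $M$ is closed or complete open; these are precisely the hypotheses of Theorem~\ref{C-Step3}. Applying that theorem gives that $M$ splits and $X=0$. It then remains to translate ``$M$ splits'' into ``$M$ is the (Riemannian) product $B\times F$.'' From the proof of Theorem~\ref{C-Step3} the splitting comes with vanishing of $T^\top, T^\bot, \Ht, \Hb$; in the doubly-twisted setting $\Hb = -n\,\nabla^\top(\log u)$ and $\Ht = -\nu\,\nabla^\bot(\log v)$, so $\Hb=0$ forces $\nabla^\top(\log u)=0$ and $\Ht=0$ forces $\nabla^\bot(\log v)=0$; combined with the totally umbilical formulas this also kills $h^\bot$ and $h^\top$, so both factor distributions are totally geodesic. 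By the de Rham decomposition theorem the metric $g = g^\top + g^\bot$ is then a Riemannian product metric, i.e.\ $M=B\times F$ with a product metric (after rescaling $u,v$ to constants, which one absorbs into $g_B,g_F$).

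The one point requiring a little care — and the step I expect to be the main obstacle — is the integrability/$L^1$ bookkeeping: one must make sure the single vector field $\xi = H^\bot + H^\top + \tfrac12 X$ is the correct one and that $\|\xi\|_g\in L^1(M,g)$ is genuinely what Theorem~\ref{C-Step3} needs (it is, verbatim), rather than a leafwise condition as in the earlier integral-formula theorems. Since the corollary's hypothesis states $\|\xi_{|M}\|_g\in L^1(M,g)$ with the same $\xi$, this matches directly and no extra argument is needed; the only substantive content beyond citing Theorem~\ref{C-Step3} is the elementary observation that for a doubly-twisted product ``the Riemannian metric splits as a de Rham product'' is equivalent to ``$u$ and $v$ are (locally) constant,'' i.e.\ $M$ is the product $B\times F$.
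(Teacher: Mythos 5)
Your proposal is correct and follows exactly the route the paper intends: the corollary is an immediate application of Theorem~\ref{C-Step3}, using the quoted formulas from \cite{pr} showing that the leaves and fibers of a doubly-twisted product are totally umbilical, so the theorem's hypotheses are met verbatim and its conclusion (vanishing of $T^\top,T^\bot,\Ht,\Hb$ and $X$, hence splitting) translates into $u,v$ being constant and $M$ being a Riemannian product. Your extra care about the $L^1$ condition and the identification of ``splits'' with ``is the product'' is exactly the (implicit) content of the paper's one-line derivation.
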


\section{The weighted mixed sectional curvature}
\label{sec:w-Top}

Much is known about foliations with $K_{\rm mix}=\const$. Examples are totally geodesic
1)~$k$-{\it nullity} foliations on manifolds with degenerate curvature tensor,
the certain metrics are called {partially hyperbolic, parabolic or elliptic};
2)~{\it relative nullity} foliations of curvature-invariant submanifolds $M$ (e.g. of space forms).
Submanifolds with positive the relative nullity index $\mu(M)=\dim\ker h(m)$
(introduced by Chern and Kuiper)
have a structure of ruled developable submanifolds.

The proof of Ferus's result is based on analysis of the matrix Riccati equation
\begin{equation}\label{Eq-Ric}
 \dot B_{\dot\gamma}+(B_{\dot\gamma})^2 +R^\bot_{\dot\gamma}=0,
\end{equation}
along a leaf geodesic $\gamma$, where
$B_x=B(x,\cdot)$ for short, and
 the \textit{co-nullity
 tensor} of $\calf$ is
\[
 B(x,y)=-(\nabla_x\, y)^\bot\quad (x\in T\calf,\,y\in T^\bot\calf).
\]
 In this section, we introduce weighted modification of the co-nullity tensor of a totally geodesic foliated
manifold $(M,g)$ equipped with a vector field $X$,
\begin{equation*}
 B^X_{x} = B_x - g(X/n,x)\id^\bot.
\end{equation*}
Then the following ``weighted" Riccati equation holds along leaf geodesics:
\begin{equation}\label{E-Ric-Bf}
 \dot B^X_{\dot\gamma}+(B^X_{\dot\gamma})^2 +{2\,g(X/n,\dot\gamma)}\,B^X_{\dot\gamma} +R^{\bot}_{X,\dot\gamma}=0.
\end{equation}

Next theorem and corollary with constant $R^{\bot}_{X,x}>0$ generalize Ferus's results (with~$X=0$).

\begin{thm}\label{T-k-const}
Let $(M^{n+\nu}, g, X)$ be endowed with a totally geodesic foliation $\calf^\nu$.
Suppose that there exist $k=\const>0$ and a point $m\in M$ such that
along any leaf geodesic $\gamma:[0,\pi/\sqrt k]\to M$ with $\gamma(0)=m$
the weighted Jacobi operator has the view $R^{\bot}_{X,\dot\gamma}=k\id^\bot$ and
\begin{equation}\label{E-X1}
 g(X/n,\dot\gamma)^2 \le k.
\end{equation}
Then $\nu<\rho(n)$.
\end{thm}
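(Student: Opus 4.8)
The plan is to reduce Theorem~\ref{T-k-const} to Ferus's original theorem (stated in the Introduction) by showing that the hypotheses force the ordinary $\mathcal D^\bot$-Jacobi operator $R^\bot_{\dot\gamma}$ along leaf geodesics through $m$ to equal $k'\,\id^\bot$ for a suitable positive constant $k'$, and then applying Ferus's bound $\nu<\rho(n)$. The crucial tool is the weighted Riccati equation~\eqref{E-Ric-Bf} satisfied by $B^X_{\dot\gamma}$, which has the same structural form as the classical Riccati equation~\eqref{Eq-Ric} except for the extra first-order term $2\,g(X/n,\dot\gamma)\,B^X_{\dot\gamma}$ and the replacement of $R^\bot$ by $R^\bot_{X,\dot\gamma}=k\,\id^\bot$.

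First I would eliminate the first-order term by an integrating-factor / reparametrization trick: along a fixed unit-speed leaf geodesic $\gamma$, set $\mu(t)=\exp\!\big(\int_0^t 2\,g(X/n,\dot\gamma)\,ds\big)$ and introduce $\widetilde B(t)=\mu(t)\,B^X_{\dot\gamma(t)}$, or equivalently change the parameter by $d\tau = \mu(t)^{-1}dt$; either way \eqref{E-Ric-Bf} becomes a Riccati equation of the pure form $\widetilde B' + \widetilde B^2 + \widetilde R = 0$ with $\widetilde R$ a scalar multiple of $\id^\bot$. Then I would argue that since the ``curvature'' term stays a (possibly varying) positive multiple of the identity, the solution $\widetilde B(t)$ remains a scalar multiple of $\id^\bot$ for all $t$ (scalar Riccati solutions stay scalar), hence so does $B^X_{\dot\gamma}$, and therefore so does the genuine co-nullity operator $B_{\dot\gamma} = B^X_{\dot\gamma} + g(X/n,\dot\gamma)\,\id^\bot$ and its associated Jacobi operator $R^\bot_{\dot\gamma}$.

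Next I would control the blow-up time. The classical point of Ferus's argument is that if $R^\bot_x\equiv k\,\id^\bot$ with $k>0$ on a complete leaf, the scalar Riccati solution $\beta(t)$ with $\beta' + \beta^2 + k = 0$ is $\beta(t)=\sqrt k\,\cot(\sqrt k\,(t-t_0))$, which forces a focal point within distance $\pi/\sqrt k$ unless the leaf dimension obeys $\nu<\rho(n)$; the topological input is that otherwise the leaf would carry a nonsingular normal Jacobi field structure contradicting the vector-field count $\rho(n)$. Here condition~\eqref{E-X1}, $g(X/n,\dot\gamma)^2\le k$, is exactly what is needed to keep the reparametrized Riccati comparison honest: it guarantees that the extra drift coming from $X$ cannot push the blow-up of $B^X_{\dot\gamma}$ past the geodesic segment $[0,\pi/\sqrt k]$, so the argument closes on the stated interval. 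Concretely, I would set up a scalar comparison between the eigenvalue function of $B^X_{\dot\gamma}$ and the solution of $\dot\beta + \beta^2 + 2a\beta + k = 0$ with $|a|=|g(X/n,\dot\gamma)|\le\sqrt k$, noting that completing the square gives $\dot\beta + (\beta+a)^2 + (k-a^2)=0$ with $k-a^2\ge 0$, so $\beta+a$ satisfies a Riccati inequality with nonnegative curvature and must blow up in time $\le\pi/\sqrt k$ if it starts appropriately; combined with total geodesy of $\calf$ this produces the needed focal point / contradiction.

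The main obstacle I anticipate is the last step: making the comparison on $[0,\pi/\sqrt k]$ fully rigorous when $X$ is arbitrary and $a=a(t)$ varies, rather than constant. One must verify that the reparametrization $d\tau=\mu(t)^{-1}dt$ does not shrink the total $\tau$-length below $\pi/\sqrt k$ (or, if it does, adjust the comparison constant accordingly), and that the scalar Riccati solution genuinely escapes to $-\infty$ in finite time so that a focal point of the leaf along $\gamma$ actually occurs — this is what lets us invoke the topology behind $\rho(n)$ exactly as in \cite{Fe}. Once that scalar ODE comparison is pinned down, the reduction to Ferus's theorem and hence the bound $\nu<\rho(n)$ follows formally. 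I would also double-check that the hypothesis is only needed along leaf geodesics emanating from the single point $m$, which is what the theorem asserts, and that completeness of the leaf through $m$ (guaranteed if $M$ is closed, as in the ``weighted Toponogov conjecture'' setting) is what permits extending $\gamma$ to length $\pi/\sqrt k$.
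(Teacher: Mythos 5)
Your operative step (the third paragraph) is in substance the paper's own argument: assuming $\nu\ge\rho(n)$, the Ferus-type topological count produces a unit $x\in T_m\calf$, a unit $y$ and $\lambda_0\le0$ with $B^X_x y=\lambda_0 y$; because $R^\bot_{X,\dot\gamma}=k\,\id^\bot$ is a \emph{constant} scalar multiple of the identity, a parallel eigenvector of the solution of the weighted Riccati equation \eqref{E-Ric-Bf} stays an eigenvector, so the eigenvalue satisfies the scalar equation $\dot\lambda+\lambda^2+2a\lambda+k=0$ with $a=g(X/n,\dot\gamma)$, and one tries to show that \eqref{E-X1} forces $\lambda$ to leave every bounded set before $t=\pi/\sqrt k$, contradicting the fact that $B^X_{\dot\gamma}$ is defined on all of $[0,\pi/\sqrt k]$. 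So the skeleton is right. But your \emph{stated} plan in the first two paragraphs is not: the hypothesis gives $R^\bot_{\dot\gamma}=\bigl(k-\frac12\,{\cal L}_{X/n}\,g(\dot\gamma,\dot\gamma)-g(X/n,\dot\gamma)^2\bigr)\id^\bot$, a multiple of the identity whose factor varies along $\gamma$ and from geodesic to geodesic and may be negative, so Ferus's theorem (which needs a positive \emph{constant}) cannot be invoked verbatim; your integrating-factor normal form still carries the non-constant curvature $\mu^2k$. Likewise, the matrix solution does not ``stay a scalar multiple of $\id^\bot$'' --- $B^X_x$ at $m$ is an arbitrary operator; what is preserved (and what the paper actually uses) is only the individual parallel eigenvector.

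The more serious issue is that the blow-up step you explicitly leave open is the entire analytic content of the theorem, and completing the square does not settle it. From $\dot\lambda=-(\lambda+a)^2-(k-a^2)$ with $k-a^2\ge0$ one only gets that $\lambda$ is non-increasing; already for $a\equiv\sqrt k$ and $\lambda_0=0$ (both permitted by \eqref{E-X1}) the solution $\lambda(t)=-\sqrt k+\bigl(t+1/\sqrt k\bigr)^{-1}$ exists for all $t\ge0$, so no singularity occurs on $[0,\pi/\sqrt k]$ and no contradiction is reached. A genuine comparison (e.g. writing $\lambda=\dot u/u$, $u=e^{-\int_0^t a\,ds}v$, so that $\ddot v+(k-a^2-\dot a)v=0$, and applying Sturm's theorem) needs a lower bound on $k-a^2-\dot a$, i.e. control of the quantity $\bigl|g(\nabla_{\dot\gamma}(X/n),\dot\gamma)+g(X/n,\dot\gamma)^2\bigr|$ that plays the role of $\eps$ in Theorem~\ref{T-4.18}; the pointwise bound \eqref{E-X1} alone does not deliver this. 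To be fair, the paper's proof also asserts the non-extendability of $\lambda$ in one line, so you have located exactly the delicate point; but as written your proposal does not close it, and the detour through ``constant unweighted Jacobi operator'' should be deleted.
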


\begin{proof} Assume the contrary, then there are unit vectors $x\in T_{m}\calf$ and $y\in T^\bot_{m}\calf$ and $\lambda_0\le0$ such that
$B^X_x\,y=\lambda_0 y$ for a geodesic $\gamma(t)$ with initial velocity $\dot\gamma(0)=x$.
Let $\bar y(t)\ (\bar y(0)=y)$ be a parallel vector field along $\gamma$.
The eigenvectors of the solution $B^X_{\dot\gamma}$ of \eqref{E-Ric-Bf} with $R^{\bot}_{X,\dot\gamma}=k\id^\bot$
do not depend on $t$.
Then $B^X_{\dot\gamma}\,\bar y(t)=\lambda(t)\,\bar y(t)$ for certain eigenfunction $\lambda(t)$, which satisfies the scalar Riccati equation
\begin{equation}\label{E-X2}
 \dot\lambda +\lambda^2 +
 2\,{\lambda}
 \,g(X/n,\dot\gamma) +k = 0.
\end{equation}
By \eqref{E-X1}, solution $\lambda(t)$ of \eqref{E-X2}  cannot be extended
 to the segment $[0,\pi/\sqrt k]$,
a contradiction.
\end{proof}

\smallskip

A submanifold $M\subset\bar M$ is called {\it curvature-invariant} if the curvature tensor of $\bar M$ obeys
\begin{equation}\label{E-6.5}
 \bar R_{x,y}\,z^{\bot}=0,\qquad(x,\,y,\,z\in TM).
\end{equation}
Such submanifold
with posi\-tive index of relative nullity $\mu(M)=\min\limits_{m\in M}\dim\ker h(m)$
has the structure of a ruled developable submanifold.
The {\it relative nullity space} of the second fundamental form $h$ of
$M\subset\bar M$ at $m\in M$ is given by
 $\ker h(m)=\{x\in T_mM:\,h(x,y)=0\ {\rm for\ all }\ y\in T_mM\}$.

 The {\it extrinsic $q$th Ricci curvature} is defined by formula
\[
 \Ric_h^q(x_0;x_1,\dots,x_q)=\sum\nolimits_{i=1}^q[g(h(x_0,x_0),h(x_i,x_i))-g(h(x_0,x_i),h(x_0,x_i))].
\]
For $q=1$ it is called an {\it extrinsic sectional curvature}.

\begin{corollary}[for $X=0$ see \cite{rov-m}] Let $M^n$ be a complete curvature-invariant submanifold in
$(\bar M^{n+p}, \bar g, X)$.
Suppose that along any geodesic $\gamma:\RR\to M$ starting at
$\dot\gamma(0)\in \ker h$,
the weighted Jacobi operator of $\bar M$ obeys $\bar R^{\bot}_{X,\dot\gamma}=k\id^\bot$,
and that \eqref{E-X1} holds for some constant $k>0$.
Then $M$ is a totally geodesic submanifold if any of the following requirements are satisfied:

\smallskip

1) $\mu(M)>\nu(n):=\max\{t:t<\rho(n-t)\}$,
\quad
2) ${\rm Ric}^q_h\le0$ and $2p<n-\nu(n)-q+\delta_{1q}$.

\end{corollary}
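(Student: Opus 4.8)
The plan is to reduce the statement to Theorem~\ref{T-k-const} by checking that the hypotheses of that theorem are inherited by the relative nullity foliation of a curvature-invariant submanifold. First I would recall that, since $M\subset\bar M$ is curvature-invariant in the sense of \eqref{E-6.5}, the distribution $m\mapsto\ker h(m)$ has constant rank $\mu(M)$ on the relevant subset and is integrable with totally geodesic leaves; moreover the Gauss equation gives, for $x\in\ker h$ and $z\in T^\bot_MM\cap TM'$ (the normal directions inside the leaf $M'$ of $\ker h$), that the $\bar M$-Jacobi operator restricted to these directions equals the intrinsic Jacobi operator of $M$, so the assumption $\bar R^{\bot}_{X,\dot\gamma}=k\,\id^\bot$ together with \eqref{E-X1} translates into precisely the hypothesis on $R^{\bot}_{X,\dot\gamma}$ needed to run Theorem~\ref{T-k-const} on the leaf $M'$. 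Applying that theorem to $\calf=\{$leaves of $\ker h\}$ with $\nu=\mu(M)$ and ambient codimension-$n$ complement replaced by $n-\mu(M)$ inside $M$, I get $\mu(M)<\rho(n-\mu(M))$.

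Part~1) is then immediate: the conclusion $\mu(M)<\rho(n-\mu(M))$ is impossible once $\mu(M)\ge\nu(n)=\max\{t:t<\rho(n-t)\}$, since by definition of $\nu(n)$ any integer $t>\nu(n)$ fails $t<\rho(n-t)$. Hence if $\mu(M)>\nu(n)$ then the only escape is that the contrary assumption in the proof of Theorem~\ref{T-k-const} never arises, i.e. there are no nonpositive eigenvalues of $B^X_x$, which by the completeness of the leaves and the Riccati comparison forces $h\equiv0$ on all of $M$; that is, $M$ is totally geodesic.

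For part~2) I would bring in the extrinsic $q$th Ricci curvature. The point is to bound $\mu(M)$ from below using the codimension $p$ and the sign condition $\Ric^q_h\le0$: a standard argument (compare the treatment of Wu-type functions in \cite{rov-m}) shows that if $\Ric^q_h\le0$ then at each point the nullity index satisfies $\mu(M)\ge n-2p-q+\delta_{1q}$, roughly because a nonzero second fundamental form with $q$ orthonormal directions of nonpositive extrinsic $q$th Ricci curvature forces the image of $h$ to occupy enough of the $p$-dimensional normal space that its kernel is correspondingly large. Combining this with the hypothesis $2p<n-\nu(n)-q+\delta_{1q}$ gives $\mu(M)\ge n-2p-q+\delta_{1q}>\nu(n)$, so part~1) applies and $M$ is totally geodesic.

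The main obstacle I expect is the first step: carefully justifying that the restriction of the ambient weighted Jacobi operator $\bar R^{\bot}_{X,\dot\gamma}$ to the normal directions of the leaf inside $M$ coincides (as an operator equal to $k\,\id$) with the object $R^{\bot}_{X,\dot\gamma}$ appearing in Theorem~\ref{T-k-const}, including the correct bookkeeping of the weighted correction term built from $X$ and of the relevant dimension in the definition of $R^{\bot}_{X,\dot\gamma}$ (the $\id^\bot$ there is the identity of the $(n-\mu)$-dimensional normal bundle of the leaf within $M$). Once that identification and the leaf-completeness are in place, both parts follow by the dichotomy already built into the proof of Theorem~\ref{T-k-const} plus the elementary nullity estimate for part~2).
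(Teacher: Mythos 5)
Your overall strategy is the right one, and it is the argument the paper intends (the corollary is stated without proof, deferring to \cite{rov-m} for $X=0$): restrict to the open set where $\mu(m)=\mu(M)$, use curvature-invariance \eqref{E-6.5} and the Gauss equation (whose correction terms vanish for $\dot\gamma\in\ker h$) to transfer the hypothesis $\bar R^{\bot}_{X,\dot\gamma}=k\,\id^\bot$ to the relative nullity foliation, invoke Ferus's completeness of its leaves, and apply Theorem~\ref{T-k-const} with $\nu=\mu(M)$ and normal rank $n-\mu(M)$ to get $\mu(M)<\rho(n-\mu(M))$, i.e.\ $\mu(M)\le\nu(n)$. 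Part 2) then correctly reduces to part 1) via the nullity estimate $\mu(M)\ge n-2p-q+\delta_{1q}$ for symmetric bilinear maps with $\Ric^q_h\le0$; that estimate is a genuine lemma of \cite{rov-m} and is fine to quote, though your heuristic for it is not a proof.

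Two points need repair. First, the dichotomy you describe in part 1) is garbled: once $0<\mu(M)<n$ and $\mu(M)>\nu(n)$ (so $\mu(M)\ge\rho(n-\mu(M))$), the algebraic-topology step in the proof of Theorem~\ref{T-k-const} \emph{forces} the existence of a nonpositive real eigenvalue of some $B^X_x$, and the Riccati blow-up then contradicts completeness of the leaf; there is no ``escape'' in which that eigenvalue simply fails to exist, and the absence of such eigenvalues would not by itself imply $h\equiv0$. The correct conclusion is that the case $0<\mu(M)<n$ is impossible, and since $\mu(M)>\nu(n)\ge0$ rules out $\mu(M)=0$, one gets $\mu(M)=n$, i.e.\ $h\equiv0$. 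Second, you should also write $\mu(M)>\nu(n)$ (strict) where you wrote $\mu(M)\ge\nu(n)$: by definition $t=\nu(n)$ still satisfies $t<\rho(n-t)$, so equality produces no contradiction. With these corrections the proof is complete and coincides with the intended one.
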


 In \cite{rov-m}, we examined Toponogov's conjecture for a foliation given near a complete leaf:
the necessity of additional assumptions in this case was shown, the conjecture was confirmed
for ruled submanifolds of spherical space forms,
rigidity and splitting of foliations with nonnegative mixed curvature was proven under suitable assumptions.
 The geometric construction, used in investigating of
 the problem, is based on estimates of the length and the volume of associated Jacobi field along an
``extremal geodesic" of given length, and it examines conditions when co-nullity tensor of
a foliation has no real eigenvectors; hence, the dimension of a leaf is smaller than~$\rho(n)$.
We~discovered a variation procedure based on the concept
of
the ``turbulence" of a foliation, that is the rotational component of the co-nullity tensor.
In this section  we extend the above methods for exploring the ``weighted" Toponogov's conjecture.

A smooth $(1,1)$-tensor field $Y(t):T^\bot_\gamma\calf\to T^\bot_\gamma\calf$ along a leaf geodesic $\gamma$ is called a \textit{Jacobi tensor} if it satisfies the Jacobi equation
\begin{equation}\label{Eq-3.3}
\ddot Y+R_{\dot\gamma}Y=0,
\end{equation}
and
$\ker Y(t)\cap\ker Y'(t)=\{0\}$ for all $t$;
hence,
the action of $Y$ on
linearly independent parallel sections of $\gamma^\bot$ gives rise to linearly independent Jacobi vector fields.
We have $ B_{\dot\gamma}=\dot Y Y^{-1}$.

A solution $y(t)\subset T^\bot_{\gamma(t)}\calf\approx\RR^n$ of the Jacobi equation $\ddot y+R_{\dot\gamma}\,y=0$
with constant
matrix $R_{\dot\gamma}=k\id^\bot>0$ has the view
$\ y(t)=y(0)\cos(\sqrt kt)+\frac{y'(0)}{\sqrt k}\sin(\sqrt kt)$.
If~the initial values $y(0)$ and $y'(0)$ are linearly independent,
then $y(t)$ parameterizes an ellipse in the plane $y(0)\land y'(0)$,
moreover, the area of the parallelogram $y(t)\land y'(t)$ is constant.

\begin{lem}[see \cite{rov-m}]\label{L-4.7}
Let a solution $y(t)\subset\Bbb R^n$ of the Jacobi ODE
\begin{equation}\label{E-4.1}
 \ddot y+R(t)y=0\quad(0\le t\le\pi/{\sqrt k}),
\end{equation}
be written in the form $y(t)=\bar y(t)+u(t)$,
where
$\bar y(t)=y(0)\cos(\sqrt k\,t)+\frac{y'(0)}{\sqrt k}\sin(\sqrt k\,t)$
and the norm $\ ||R(t)-k\id||\le\varepsilon_1< k/2$. Then
 $|u(t)|\le\frac{\varepsilon_1}{k-(1-\cos(\sqrt k\,t))\,\varepsilon_1}\int_0^t\sqrt k\,|\bar y(s)|\sin(\sqrt k\,(t-s))\,ds$\,.
\end{lem}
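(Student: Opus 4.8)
The statement to prove is Lemma~\ref{L-4.7}: given a solution $y(t)$ of $\ddot y+R(t)y=0$ on $[0,\pi/\sqrt k]$ with $\|R(t)-k\id\|\le\eps_1<k/2$, decomposed as $y=\bar y+u$ with $\bar y$ the solution of the ``constant-curvature" equation $\ddot{\bar y}+k\bar y=0$ sharing the same initial data, we want the pointwise bound on $u(t)$. The natural approach is to convert the ODE for $u$ into an integral (Volterra) equation and estimate it. First I would write $\ddot u+k\,u=-(R(t)-k\id)\,y$, with $u(0)=u'(0)=0$, and apply the variation-of-constants formula for the operator $\frac{d^2}{dt^2}+k$, whose fundamental solutions are $\cos(\sqrt k\,t)$ and $\frac{1}{\sqrt k}\sin(\sqrt k\,t)$. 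This gives
\[
 u(t)=-\int_0^t \frac{\sin(\sqrt k\,(t-s))}{\sqrt k}\,\big(R(s)-k\id\big)\,y(s)\,ds .
\]
Taking norms and using $\|R(s)-k\id\|\le\eps_1$ yields
\[
 |u(t)|\le \frac{\eps_1}{\sqrt k}\int_0^t \sin(\sqrt k\,(t-s))\,|y(s)|\,ds
 \le \frac{\eps_1}{\sqrt k}\int_0^t \sin(\sqrt k\,(t-s))\,\big(|\bar y(s)|+|u(s)|\big)\,ds,
\]
where on $[0,\pi/\sqrt k]$ the kernel $\sin(\sqrt k\,(t-s))$ is nonnegative, which is exactly why the interval is restricted there.

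**Closing the loop.** The remaining work is to absorb the $|u(s)|$ term. Set $\phi(t)=\max_{0\le s\le t}|u(s)|$ (a nondecreasing function) and $m(t)=\frac{\eps_1}{\sqrt k}\int_0^t\sqrt k\,|\bar y(s)|\sin(\sqrt k\,(t-s))\,ds$, the target right-hand side up to the claimed denominator. From the inequality above,
\[
 |u(t)|\le m(t)+\frac{\eps_1}{\sqrt k}\,\phi(t)\int_0^t\sin(\sqrt k\,(t-s))\,ds
 = m(t)+\frac{\eps_1}{k}\,\phi(t)\,\big(1-\cos(\sqrt k\,t)\big).
\]
Since $m(t)$ is nondecreasing on $[0,\pi/\sqrt k]$ and $1-\cos(\sqrt k\,t)$ is nondecreasing there, taking the max over $s\le t$ gives $\phi(t)\le m(t)+\frac{\eps_1}{k}\big(1-\cos(\sqrt k\,t)\big)\phi(t)$, hence
\[
 \phi(t)\le \frac{m(t)}{1-\frac{\eps_1}{k}(1-\cos(\sqrt k\,t))}
 = \frac{\eps_1}{k-(1-\cos(\sqrt k\,t))\,\eps_1}\int_0^t\sqrt k\,|\bar y(s)|\sin(\sqrt k\,(t-s))\,ds ,
\]
which is precisely the asserted bound (note $\eps_1<k/2$ guarantees the denominator is positive since $1-\cos(\sqrt k\,t)\le 2$). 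Because $|u(t)|\le\phi(t)$, the lemma follows.

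**Where the difficulty lies.** There is no deep obstacle; the argument is a standard Gronwall-type / Volterra-iteration estimate, and the only genuinely delicate points are bookkeeping ones: (i) one must restrict to $t\in[0,\pi/\sqrt k]$ precisely so that $\sin(\sqrt k\,(t-s))\ge0$ for $0\le s\le t$, otherwise the triangle-inequality step and the monotonicity of the auxiliary functions break down; and (ii) one must be slightly careful that $R(t)$ is matrix-valued and $y$ vector-valued, so all estimates use the operator norm $\|R(t)-k\id\|$ and $|y(s)|\le|\bar y(s)|+|u(s)|$ rather than any sharper scalar identity. A fully rigorous treatment would also remark that existence of $u$ on the whole interval is automatic because $R$ is smooth and the equation is linear, so the integral equation and the ODE are equivalent there.
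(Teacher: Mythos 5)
Your setup is sound and, incidentally, is more than the paper itself supplies: the lemma is stated with a citation to \cite{rov-m} and no proof is given in the text, and the variation-of-constants reduction $u(t)=-\int_0^t\frac{\sin(\sqrt k\,(t-s))}{\sqrt k}\,(R(s)-k\,\mathrm{id})\,y(s)\,ds$ together with the nonnegativity of the kernel on $[0,\pi/\sqrt k\,]$ is indeed the right starting point. However, there is a genuine gap in your closing step. Writing $I(t)=\int_0^t|\bar y(s)|\sin(\sqrt k\,(t-s))\,ds$, your Gronwall argument requires $I(\tau)\le I(t)$ for $\tau\le t$ (this is your unproved assertion that ``$m(t)$ is nondecreasing''), and this is false in general. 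Indeed $I'(t)=\sqrt k\int_0^t|\bar y(s)|\cos(\sqrt k\,(t-s))\,ds$, and for $k=1$, $y(0)=\cos\psi\,e_1$, $y'(0)=\sin\psi\,e_1$ with $0<\psi<\pi/2$ one has $\bar y(s)=\cos(s-\psi)\,e_1$, and a direct computation gives $I'(\pi)=-\int_0^\pi|\cos(s-\psi)|\cos s\,ds=-\psi\cos\psi<0$. Without monotonicity your chain only yields $|u(t)|\le\frac{\varepsilon_1\sqrt k}{k-(1-\cos(\sqrt k\,t))\varepsilon_1}\sup_{\tau\le t}I(\tau)$, which is strictly weaker than the stated bound whenever $I$ has already passed a local maximum before time $t$. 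Closing the loop with $I(t)$ itself is the real content of the lemma: for instance, iterating the Volterra operator leads to the kernel $\int_r^t\sin(\sqrt k\,(t-s))\sin(\sqrt k\,(s-r))\,ds=\frac{\sin\theta-\theta\cos\theta}{2\sqrt k}$ with $\theta=\sqrt k\,(t-r)$, which near $t=\pi/\sqrt k$, $\theta\approx\pi$, is \emph{not} dominated pointwise by $\frac{1-\cos(\sqrt k\,t)}{\sqrt k}\sin\theta$; so some additional input (e.g.\ the special form of $|\bar y|$) is needed, and this is the missing idea.

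There is also a harmless bookkeeping slip: the first term of your Volterra bound is $\frac{\varepsilon_1}{\sqrt k}\int_0^t\sin(\sqrt k\,(t-s))|\bar y(s)|\,ds=\frac{\varepsilon_1}{k}\int_0^t\sqrt k\,|\bar y(s)|\sin(\sqrt k\,(t-s))\,ds$, whereas your $m(t)$ carries the prefactor $\frac{\varepsilon_1}{\sqrt k}$ in front of $\int_0^t\sqrt k\,|\bar y|\sin(\cdot)\,ds$. With the prefactor corrected to $\frac{\varepsilon_1}{k}$, the final division by $1-\frac{\varepsilon_1}{k}(1-\cos(\sqrt k\,t))$ does reproduce the lemma's right-hand side, so this is only a typo, not a further error; but the monotonicity issue above must be repaired before the proof can be accepted.
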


\begin{definition}[see \cite{rov-m}]\label{D-turb}\rm
The \textit{turbulence of a leaf} $L$ of a totally geodesic foliation
is defined~by
\[
 a(L)=\sup\{g(B_x(y),z):\,x\in TL,\ y,\,z\in TL^\bot,\ y\,\bot\,z,
\ |x|=|y|=|z|=1\}.
\]
\end{definition}

For Riemannian submersions, the turbulence was considered by B.\,O'Neill.
Notice that the condition $a(L)=0$ for all leaves means that
$T^\bot\calf$ is tangent to a totally umbilical foliation.

\smallskip
Next theorem with positive $R^{\bot}_{X,x}$  generalizes our result in \cite{rov-m} (when $X=0$).

\begin{thm}[Local]\label{T-4.18}
Let $\calf^\nu$ be a totally geodesic foliation of
$(M^{n+\nu},g,X)$,
and there exists a point $m\in M$ such that along any leaf geodesic
$\gamma:[0,\pi/{\sqrt k}]\to L\ (\gamma(0)=m)$ we have \eqref{E-X1} and
\begin{eqnarray}\label{E-4.34}
 && 0<k_1\id^\bot\le R^{\bot}_{X,\dot\gamma}\le k_2\id^\bot,\\
 \label{E-4.20}
 && (k_2-k_1+2\eps)\max\{a(L)^2,k\}\le 0.3k (k_2+\eps),
 \end{eqnarray}
where $k=\frac12\,(k_1+k_2)$ and
 $\eps:=
 \big\|\,g(\nabla_{\dot\gamma}\,(X/n),\dot\gamma) + g(X/n,\dot\gamma)^2\,\big\|<k_1$.
Then $\nu<\rho(n)$.
\end{thm}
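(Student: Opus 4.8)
The plan is to argue by contradiction along the lines of Ferus's original argument and its refinement in \cite{rov-m}, replacing the ordinary co-nullity tensor $B$ by its weighted version $B^X$ and the Jacobi operator $R^\bot$ by $R^{\bot}_{X,\dot\gamma}$. Suppose $\nu\ge\rho(n)$. By the topological fact that any $\nu$ everywhere-linearly-independent fields on $S^{n-1}$ force $\nu<\rho(n)$, there must exist a unit vector $x\in T_m\calf$ and a unit $y\in T^\bot_m\calf$ such that the solution $B^X_{\dot\gamma}$ of the weighted Riccati equation \eqref{E-Ric-Bf} along the geodesic $\gamma$ with $\dot\gamma(0)=x$, when acting on the parallel extension $\bar y(t)$, produces a Jacobi vector field $Y(t)\bar y(t)$ that vanishes (or whose associated eigen-behaviour degenerates) at some $t_0\in(0,\pi/\sqrt k\,]$. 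Equivalently, the Jacobi tensor $Y(t)$ (with $B_{\dot\gamma}=\dot YY^{-1}$, cf. \eqref{Eq-3.3}) acquires a nontrivial kernel inside the interval. The goal is then to rule this out by showing that $Y(t)$ stays invertible on $[0,\pi/\sqrt k]$.

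The core estimate is a perturbation argument: write the Jacobi ODE $\ddot Y+R_{\dot\gamma}Y=0$ with $R_{\dot\gamma}$ close to $k\id^\bot$, where $k=\frac12(k_1+k_2)$. The bound \eqref{E-4.34} gives $\|R^{\bot}_{X,\dot\gamma}-k\id^\bot\|\le\frac12(k_2-k_1)$, and the definition of $\eps$ together with the relation between $R^\bot_x$ and $R^{\bot}_{X,x}$ converts this into a bound $\|R_{\dot\gamma}-k\id^\bot\|\le\frac12(k_2-k_1)+\eps=:\eps_1$ on the genuine Jacobi operator entering \eqref{E-4.1}. Apply Lemma~\ref{L-4.7} to each solution $y(t)=\bar y(t)+u(t)$: the unperturbed part $\bar y(t)$ is the explicit sine--cosine ellipse solution, and $u(t)$ is controlled by $\tfrac{\eps_1}{k-(1-\cos\sqrt k t)\eps_1}\int_0^t\sqrt k\,|\bar y(s)|\sin(\sqrt k(t-s))\,ds$. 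One must show $|u(t)|<|\bar y(t)|$ for all $t$ (so $y(t)\ne0$), or more precisely that the whole frame $Y(t)$ remains nonsingular; this is where the turbulence $a(L)$ enters, since $a(L)$ measures the rotational (skew) component of $B$ and hence controls how the ellipse $\bar y(0)\land\bar y'(0)$ can be tilted by the initial data one is forced to choose when $B^X_x$ has a non-positive real eigenvalue. The hypothesis \eqref{E-4.20}, $(k_2-k_1+2\eps)\max\{a(L)^2,k\}\le0.3\,k(k_2+\eps)$, is precisely the quantitative inequality that makes the Lemma~\ref{L-4.7} bound on $|u|$ strictly smaller than the minimal value of $|\bar y|$ over the interval, yielding the contradiction. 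The condition \eqref{E-X1}, $g(X/n,\dot\gamma)^2\le k$, is needed to keep the scalar Riccati flow \eqref{E-X2} from blowing up prematurely, exactly as in Theorem~\ref{T-k-const}.

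The main obstacle, I expect, is the bookkeeping in the second step: passing from the clean hypothesis on $R^{\bot}_{X,\dot\gamma}$ to an effective perturbation bound on the operator actually appearing in the Jacobi equation, and then combining the turbulence estimate with Lemma~\ref{L-4.7} to extract the sharp numerical constant $0.3$. One has to track how the extremal (worst-case) initial vectors $x,y$ interact with the ellipse geometry: the skew part of $B^X$ rotates $\bar y(t)$ within the plane $\bar y(0)\land\bar y'(0)$ (area preserved, as noted after \eqref{Eq-3.3}), while the perturbation $u(t)$ pushes $y(t)$ out of that plane, and one needs both effects simultaneously small. Once the inequality $|u(t)|<\min_t|\bar y(t)|$ is established uniformly, the conclusion that $B^X_{\dot\gamma}$ has no real eigenvector with non-positive eigenvalue — hence $Y(t)$ never degenerates on $[0,\pi/\sqrt k]$ — follows, contradicting the assumption $\nu\ge\rho(n)$, and the theorem is proved. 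The remaining cases of how the degeneracy of $Y$ could manifest (kernel appearing exactly at the endpoint versus in the interior) are handled as in \cite{rov-m} and present no new difficulty.
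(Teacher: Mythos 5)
Your reduction (it suffices that $B^X_x$ has no real eigenvalues for $x\ne0$) and your inventory of ingredients (Lemma~\ref{L-4.7} with $\varepsilon_1=\frac12(k_2-k_1)+\eps$, the turbulence $a(L)$, the constant area of the unperturbed parallelogram $y\wedge y'$) match the paper's Step~1 and the setup of its Step~2. But the mechanism you propose for the contradiction is not the one that works, and as stated it fails. You aim to show $|u(t)|<|\bar y(t)|$ for all $t$, so that $y(t)\ne0$ and the Jacobi tensor $Y(t)$ stays nonsingular. Two problems: first, the relevant unperturbed solution $\bar y(t)=\big(\cos(\sqrt k\,t)+\frac{\lambda}{\sqrt k}\sin(\sqrt k\,t)\big)y_0$ with $\lambda\le0$ vanishes at $\sqrt k\,t_0=\operatorname{arccot}(-\lambda/\sqrt k)\in(0,\pi/2]$, so the inequality $|u|<|\bar y|$ cannot hold uniformly on the interval; second, and more importantly, the existence of a real eigenvector of $B^X_{x_0}$ does not force the \emph{perturbed} Jacobi field to vanish on $[0,\pi/\sqrt k]$, so establishing $y(t)\ne0$ would not contradict anything. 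Invertibility of $Y$ is simply not the quantity that the hypothesis \eqref{E-4.20} is calibrated to control.

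The paper's actual contradiction is a three-point comparison of the area function $V(t)=|y(t)|\cdot|\widetilde{y'}(t)|$. At the initial point the eigenvector condition $B^X_{x_0}y_0=\lambda y_0$ gives $y'\parallel y$, hence $V=0$ there. Step~4 (the perturbation estimate of Lemma~\ref{L-4.7} together with $\delta=\frac{k_1-\eps}{k_2+\eps}\ge0.7$, which follows from \eqref{E-4.20}) shows that $|y(t)|$ attains a first local minimum $t_m$ inside the interval; at that minimum $y'\perp y$ and $\frac12\,g(y,y)''\ge0$ force $V\ge\sqrt{k_1-\eps}\,|y|^2$, and the turbulence enters precisely here via $|y'(\tilde m)|/|y(\tilde m)|\le a(L)$, which feeds the lower bound \eqref{E-4.26} for $|y(\tilde m)|$. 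Finally, Step~2's estimate $|V'|\le\big(\frac12(k_2-k_1)+\eps\big)|y|^2$ shows that $V$ cannot climb from $0$ to that large value over a length $t_m\le\pi/\sqrt k$; the resulting inequality \eqref{E-4.27} is incompatible with \eqref{E-4.20}. None of this (the vanishing of $V$ at the eigenvector endpoint, the forced interior minimum of $|y(t)|$, the slow variation of $V$) is assembled in your sketch, so the proposal has a genuine gap at its core.
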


\begin{proof} This is divided into steps. Notice that $\eps=0$ is provided by $X^\top=0$.

\smallskip
\textbf{Step 1}.
It is sufficient to show that linear
operators $B^X_x:T^\bot_m\calf\to T^\bot_m\calf,\ (x\ne 0)$,
do not have real eigenvalues.
Suppose the opposite, i.e., there exist unit vectors
$x_0\in T_m\calf,\ y_0\in T_m^\bot\calf$ and a real $\lambda\le 0$ with the property $B^X_{x_0}(y_0)=\lambda y_0$.
Let $\gamma(t):[0,\pi/{\sqrt k}]\to M,\ (\dot\gamma(0)=x_0)$ be a
leaf geodesic, and $y(t):\gamma\to T^\bot_\gamma\calf$ a Jacobi vector field
along $\gamma$ containing the vector $y_0$. Hence \eqref{E-4.1} holds
with $\|R(t)-k\id\|\le\frac{k_2-k_1}2+\eps$,
where we denote
$\ y'=\nabla_{\dot\gamma}\,y$ and $y''=\nabla_{\dot\gamma}\nabla_{\dot\gamma}\,y$.

The Jacobi vector field $y(t)$ may be written in a form
 $y(t)=\big(\cos(\sqrt k\,t)+\frac\lambda{\sqrt k}\sin(\sqrt k\,t)\big)y_0+u(t)$,
 where
 $u(0)=u'(0)=0$.
We will show for \eqref{E-4.34} and $k_1-\eps\ge 0.582 (k_2+\eps)$,
that the function $|y(t)|$ (the length of the Jacobi vector field  $y(t)$)
has a local minimum at $t_m$ in the interval $(0,\pi/{\sqrt k})$.
 The second observation is that the function
$V(t)$ -- the area of a parallelogram, whose sides are the vectors $y(t)$ and $y'(t)$,
varies ``slowly" along a geodesic $\gamma$.
(This function is constant when $k_2=k_1$.)
 These will yield a contradiction, because $V(t)$ cannot increase from zero value $V(0)$ to a ``large" value $V(t_m)$
 on a given interval with length $t_m<\pi/{\sqrt k}$.

\smallskip

\textbf{Step 2}.
We shall prove that the
inequality
\begin{equation}\label{E-4.22}
 |V(t)'|\le\big(\frac12\,(k_2-k_1)+\eps\big)|y(t)|^2.
\end{equation}
The derivative of function $V^2=y^2(y')^2-(y,y')^2$, in view of \eqref{E-4.1}, is the following:
\[
 (V^2)'=-2\ \vmatrix g(R(t)y,y') & g(R(t)y,y)\\
 g(y,y') & g(y,y)\endvmatrix .
\]
Using linear combinations of columns in this $2\times 2-$determinant, we obtain
\begin{equation}\label{E-4.23}
 (V^2)'=-2\,g(R(t)y,\widetilde{y'})\,y^2 = -2\,g(\widetilde{R(t)y},\widetilde{y'})\,y^2,
 \end{equation}
where $\ \widetilde{}\ $ denotes the component orthogonal to vector $y(t)$. Since
\begin{equation}\label{E-4.24}
 (V(t)^2)'=2V'(t)V(t),\quad  V(t)=|y(t)|\cdot|\widetilde{y'}(t)|,
\end{equation}
then from \eqref{E-4.23} follows $\ |V(t)'|\le|\widetilde{R(t)\,y}(t)|\cdot|y(t)|$.
Thus we obtain \eqref{E-4.22} with the help of
\[
 |\widetilde{R\,y}(t)|\le|(R(t)-k\id)y(t)|\le \big(\frac12\,({k_2-k_1})+\eps\big)|y(t)|.
\]

\textbf{Step 3}. Assume that the first local minimum $t_m$ of $|y(t)|$
for $t\ge 0$ belongs to $(0,\pi/{\sqrt k}]$. Set $\tilde m=\gamma(t_m)$.
Consider (in this step) the opposite parametrization of $\gamma$ (and objects on it)
with parameter $s=t_m-t$, i.e., $\gamma(0)=\tilde m,\ \gamma(t_m)=m$.
Since $|y(s)|^2$ has a local minimum at $\tilde m$, then
\[
 y'(0)\,\bot\,y(0),\quad g(y,y)''\ge 0,
\]
and from \eqref{E-4.20} and equality $\frac 12\,g(y,y)''=g(y,y'')+g(y',y')$ follows
\begin{equation}\label{E-4.25}
 V(0)=|y'(0)|\cdot|y(0)|\ge\sqrt{k_1-\eps}\,|y(0)|^2.
\end{equation}
Moreover, $\,g(y'(\tilde m),y'(\tilde m))=g(B^X_{\dot\gamma}(y(\tilde m)),y'(\tilde m))$, i.e.,
\[
 \frac{|y'(\tilde m)|}{|y(\tilde m)|}\le\big|\big(B^X_{\dot\gamma}\big(\frac{y(\tilde m)}{|y(\tilde m)|}\big),\frac{y'(\tilde m)}{|y'(\tilde m)|}\,\big)\big|
 \le a(L).
\]
We can write
$y(s)=Y_1(s)+u_1(s)$, where
$Y_1(s)=y(0)\cos(\sqrt k\,s)+\frac{y'(0)}{\sqrt k}\sin(\sqrt k\,s).$
By Lemma~\ref{L-4.7} with $\varepsilon_1=\frac12\,({k_2-k_1})+\eps$, we have
\begin{eqnarray*}
 && 1 = |y(\tilde m)|\le|Y_1(t_m)|+|u_1(t_m)| \\
 && \le \max\{{a(L)}/{\sqrt k},1\}|y(\tilde m)|\Big(1+(1-\cos(\sqrt k\,t_m))\,
\frac{k_2-k_1+2\eps}{3k_1-k_2-4\eps}\Big),
\end{eqnarray*}
i.e., the inequality
\begin{equation}\label{E-4.26}
 |y(\tilde m)|\ge\frac{3k_1-k_2-4\eps}{k_1+k_2}/\max\big\{{a(L)}/{\sqrt k},1\big\}.
\end{equation}
From \eqref{E-4.20} follows $k_1-\eps\ge 0.7 (k_2+\eps)$,
but at the same time for $3k_1-k_2-4\eps>0$ it is sufficient to require $k_1-\eps\ge\frac 13\,(k_2+\eps)$.
From eigenvector's condition $B^X_{x_0}(y_0)=\lambda y_0$ follows
$V(t_m)=0$. In view of \eqref{E-4.22} and the estimate $|y(s)|\le 1$ for $0\le s\le t_m$, we obtain
\[
 V(0)\le\int_0^{t_m}|V'(s)|\,ds\le \frac12\,(k_2-k_1+2\eps)\int_0^{t_m}|y(s)|^2\,ds\le\big(\frac12\,({k_2-k_1})+\eps\big)t_m.
\]
From the above and \eqref{E-4.25}, \eqref{E-4.26} follows the inequality
\begin{equation}\label{E-4.27}
 \Big(\frac\pi2+\tau\Big)(1-\delta)
 \Big(\frac{1+\delta}{3\delta-1}\Big)^2\max\{{a(L)^2}/k,1\}\ge\sqrt{2\delta(1+\delta)},
\end{equation}
where
 $\tau=\sqrt k\,(t_m-t_0),\
 \delta=\frac{k_1-\eps}{k_2+\eps}$
 and
 $\sqrt k\,t_0=\text{arcctg}\big(-\lambda/{\sqrt k}\big)\in(0,\pi/2]$.

\textbf{Step 4}. We now go back to initial parametrization of $\gamma$ and
consider the problem, when $|y(t)|$ has a local minimum in the interval $(0,\pi/{\sqrt k}]$.
We can write $y(t)=\bar y(t)+u(t)$, where $t\in[t_0,2t_0]$ and
\[
 \bar y(t)=(\cos(\sqrt k\,t)+\frac\lambda{\sqrt k}\,\sin(\sqrt k\,t))y_0
 =\frac{\sin(\sqrt k\,(t_0-t))}{\sin(\sqrt k\,t_0)}\,y_0.
\]
By Lemma~\ref{L-4.7} with $\varepsilon_1=\frac12\,({k_2-k_1})+\eps$, we obtain
\begin{equation}\label{E-4.28}
 |u(t)|\le\frac{k_2-k_1+2\eps}{3k_1-k_2-4\eps}\cdot\frac 1{\sin(\sqrt k\,t_0)}
\int_0^t\sqrt k\sin(\sqrt k\,(t-s))\sin(\sqrt k\,|t_0-s|)\,ds.
\end{equation}
Using the trigonometry identity
\[
 f=\sin(\sqrt k\,(t-s))\sin(\sqrt k(t_0-s))=\frac12\,({\cos(\sqrt k(t-t_0))-\cos(\sqrt k(t+t_0-2s))}),
\]
and the shortland notation
 $\tau=\sqrt k\,(t-t_0)$,
 $\tau_0=\sqrt k\,t_0$,
 $S=\sin$,
 and
 $C=\cos$,
we transform the integral in RHS of \eqref{E-4.28}:
\[
 I(t)=\sqrt k\,\Big(\int_0^{t_0}fds-\int_{t_0}^tfds\Big)= \frac12\,(\tau_0-\tau){C(\tau)} +\frac34\,{S(\tau )}-\frac14\,{S(\tau+2\tau_0)}.
\]
Since
\[
 \tau_0,\ \tau\in[0,\pi/2],\ \ \tau_0-\tau\ge 0,\ \ C(\tau)\ge0,\ \ S(2\tau_0+\tau)\ge S(\pi+\tau)=-S(\tau),
\]
then the function $I(t)$ has the largest upper value for $\tau_0=\pi/2$:
\[
 I(t)\le\Big(\frac\pi2-\tau\Big)\frac{C(\tau)}2+S(\tau),\quad I(t_0)\le\frac\pi4\text{\ \ (since\ \ }\tau=0).
\]
Consequently, assuming $\delta=\frac{k_1}{k_2}$,
\[
 |u(t)|\le\Big((\frac\pi2-\tau)\frac{C(\tau)}2+S(\tau)\Big)
 \frac{1-\delta}{(3\delta-1)S(\tau_0)},\quad |u(t_0)|\le\frac\pi4\ \frac{1-\delta}{(3\delta-1)S(\tau_0)}.
\]
Note that $|\bar y(t)|=\frac{S(\tau)}{S(\tau_0)}$ and $\bar y(t_0)=0$.
Since $|y|'(0)=\lambda\le 0$, and in case of $\lambda=0$ from \eqref{E-4.34}
follows that $|y|''(0)<0$. Hence, $|y(t)|$ is decreasing for
small values $t\ge 0$. Thus for the property that $|y(t)|$ has a
local minimum at some $t_m\in(0,t]$ it is sufficient to require
$|y(t_0)|\le|y(t)|$, i.e.,
\[
 |u(t_0)|+|u(t)|\le|\bar y(t)|.
\]
The last inequality (in view of above estimates) is reduced to
\[
 \big(\frac\pi4+\frac12\big(\frac\pi2-\tau\big){C(\tau)}+S(\tau)\big)\frac{1-\delta}{3\delta-1}\le S(\tau),
\]
that is equivalent to
\begin{equation*}
 \delta\ge 1-\frac{2S(\tau)}{\frac\pi4+\frac12(\frac\pi2-\tau){C(\tau)}+4S(\tau)}.
\end{equation*}
Notice that \eqref{E-4.20} yields $\delta\ge 0.7$.
One can verify that inequality
 $\frac{2S(\tau)}{\frac\pi4+(\frac\pi2-\tau)\frac{C(\tau)}2+4S(\tau)}\ge 0.3$
holds for all $\tau\in[0.5,\pi/2]$, i.e., for $\delta\ge 0.7$ the function
$|y(t)|$ has local minimum at $t_m\in[0,\ (0.5+\pi/2)/\sqrt k]$.
But for $\delta\in[0.7,1]$ and $\tau=0.5$ the inequality
\begin{equation}\label{E-4.30}
 0.3\Big(\frac\pi2+\tau\Big)\Big(\frac{1+\delta}{3\delta-1}\Big)^2<\sqrt{2\delta(1+\delta)}
 \end{equation}
is equivalent to
\begin{equation}\label{E-4.31}
 f(\delta)=\Big(\frac{3\delta-1}{1+\delta}\Big)^2\sqrt{2\delta(1+\delta)}>0.15(\pi+1).
 \end{equation}
Since $\frac{3\delta-1}{1+\delta}$ is monotone increasing, then
also $f(\delta)$ is monotone increasing for $\delta>1/3$. It is
easy to show $f(0.7)>0.63>0.15(\pi+1)$. Thus \eqref{E-4.31} and also \eqref{E-4.30}
are true for $\delta\ge 0.7$. From \eqref{E-4.30} and \eqref{E-4.27} follows the
inequality, which contradicts to \eqref{E-4.20}.
\end{proof}

Next theorem with nonnegative $R^{\bot}_{X,x}$
generalizes our result in \cite{rov-m} (when $X=0$).

\begin{thm} [Decomposition]\label{T-decomp}
Let $\calf^\nu$ be a compact totally geodesic foliation of $(M^{n+\nu},g,X)$.
Suppose that \eqref{E-X1} and
\begin{eqnarray*}
 && 0\le k_1\id^\bot\le R^{\bot}_{X,x}\le k_2\id^\bot\quad(x\in T\calf,\ |x|=1),\\
 && (k_2-k_1+2\,\eps)\cdot\max\{a(L)^2,k\}\le0.337\,(k_2+\eps)\,k,
\end{eqnarray*}
 where $k=\frac12\,(k_1+k_2)$,
$\eps:=\big\|\,g(\nabla_{x}\,(X/n), x)+g(X/n, x)^2\,\big\|<k_1$ for all unit $x\in T\calf$, and $L$ is some leaf.
 If~$\nu\ge\rho(n)$ then $k_1=k_2=0$ and $M$ splits along $\calf$.
\end{thm}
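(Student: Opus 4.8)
The plan is to combine the rigidity half of Theorem~\ref{T-4.18} (the ``Local'' theorem) with a compactness/limiting argument in the borderline case, in the spirit of the $X=0$ decomposition result of \cite{rov-m}. First I would argue that if $\nu\ge\rho(n)$ then we cannot have $k_1>0$. Indeed, the hypotheses here are exactly those of Theorem~\ref{T-4.18} except that $k_1\ge0$ is allowed instead of $k_1>0$, and the numerical constant $0.337$ is slightly larger than the $0.3$ appearing in \eqref{E-4.20}; so whenever $k_1>0$, the pair $(k_1,k_2)$ still satisfies \eqref{E-4.34}, and the strict version of \eqref{E-4.20} (with $0.3$) holds by continuity if $k_2-k_1$ and $\eps$ are adjusted infinitesimally — more precisely, one applies Theorem~\ref{T-4.18} with slightly enlarged $k_2$ and $\eps$ so that the strict inequality \eqref{E-4.20} is met, which is possible precisely because $0.337>0.3$. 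This forces $\nu<\rho(n)$, contradicting the assumption. Hence $k_1=0$.

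Next I would show $k_2=0$ as well. With $k_1=0$, the estimate $0\le R^{\bot}_{X,x}\le k_2\id^\bot$ together with $\Ric^\bot_X(x,x)=\tr_g R^{\bot}_{X,x}\ge0$ says the weighted Jacobi operator on ${\cal D}^\bot$ along leaf geodesics is nonnegative. The idea is that along a complete (here compact) leaf geodesic $\gamma$, the weighted Riccati equation \eqref{E-Ric-Bf} with a nonnegative ``potential'' $R^{\bot}_{X,\dot\gamma}$ and the bounded perturbation term $2g(X/n,\dot\gamma)B^X_{\dot\gamma}$ must admit a globally defined bounded solution $B^X_{\dot\gamma}$ (the co-nullity tensor exists on the whole compact leaf); a standard Riccati comparison argument then shows that $R^{\bot}_{X,\dot\gamma}\equiv0$ along every such geodesic, for otherwise the solution would blow up in finite time. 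Thus $R^{\bot}_{X,x}\equiv0$ for all unit $x\in T\calf$, which gives $k_2=0$ and in particular $R^\bot_x = -\big(\frac12{\cal L}_{X/n}g(x,x)+g(X/n,x)^2\big)\id^\bot$.

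Finally I would deduce the splitting. From $R^{\bot}_{X,x}\equiv0$ and $\eps<k_1=0$ — which actually forces $\eps=0$, hence $X^\top=0$ by the remark following \eqref{E-4.20} — we get $X=0$ on the foliated region, and then $R^\bot_x\equiv0$: the leaves are totally geodesic with flat ${\cal D}^\bot$-Jacobi operator. In this situation the nearby leaves are mutually equidistant and the normal holonomy is trivial, so the standard de Rham-type argument (as in the $X=0$ case of \cite{rov-m}) applies: $(M,g)$ is locally, and by compactness globally, a metric product $L\times L^\bot$ along $\calf$. The main obstacle I anticipate is the second step — upgrading $k_1=0$ to $k_2=0$ — because it requires turning the one-sided bound $R^{\bot}_{X,x}\ge0$ into the \emph{identity} $R^{\bot}_{X,x}=0$; this is where compactness of the leaves is essential (it guarantees global existence of the co-nullity/Riccati solution), and one must handle the extra first-order term $2g(X/n,\dot\gamma)B^X_{\dot\gamma}$ in \eqref{E-Ric-Bf} carefully, e.g.\ by the substitution absorbing $g(X/n,\dot\gamma)$ into an integrating factor so that the equation reduces to a classical nonnegative-potential Riccati equation along $\gamma$.
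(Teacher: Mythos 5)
The paper states this Decomposition theorem without giving a proof, so your proposal has to stand on its own; unfortunately its first and central step runs in the wrong direction. You claim that whenever $k_1>0$ the hypotheses of Theorem~\ref{T-4.18} can be arranged ``precisely because $0.337>0.3$''. But a larger constant on the right-hand side makes the pinching hypothesis \emph{weaker}, not stronger: a configuration with
\[
 0.3\,(k_2+\eps)\,k \;<\; (k_2-k_1+2\,\eps)\max\{a(L)^2,k\} \;\le\; 0.337\,(k_2+\eps)\,k
\]
satisfies the hypothesis of the Decomposition theorem while violating \eqref{E-4.20}, and no infinitesimal adjustment of $k_2$ or $\eps$ (which may only be enlarged if they are to remain valid bounds) converts the former into the latter. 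The entire point of stating the theorem with $0.337$ is that compactness of $\calf$ makes every leaf geodesic complete, so the quantitative Steps 3--4 of the proof of Theorem~\ref{T-4.18} can be rerun with the first minimum $t_m$ (equivalently the parameter $\tau$) allowed to range over a longer interval than $[0,(0.5+\pi/2)/\sqrt k]$; redoing the estimates \eqref{E-4.27}--\eqref{E-4.31} with this extra room is what improves the admissible ratio from $0.3$ to $0.337$. That reworking is the missing content of the proof and cannot be replaced by a citation of the Local theorem.

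Two further remarks. Your second step is unnecessary: since $\max\{a(L)^2,k\}\ge k$, the pinching hypothesis already gives (when $k>0$) $k_2-k_1+2\,\eps\le0.337\,(k_2+\eps)$, i.e.\ $k_1\ge0.663\,k_2+1.663\,\eps$, so $k_1=0$ forces $k_2=0$ by pure arithmetic and the true dichotomy is ``either $k_1=k_2=0$ or $k_1\ge0.663\,k_2>0$'' --- no Riccati comparison is needed there. In the splitting step, $\eps=0$ yields $X^\top=0$ only after integrating $g(\nabla_x(X/n),x)+g(X/n,x)^2=0$ over the compact leaves (the remark after \eqref{E-4.20} states the converse implication), and $X^\top=0$ does not give $X=0$; what it gives is $R^{\bot}_{X,x}=R^{\bot}_{x}$ for $x\in T\calf$, hence $R^\bot_x\equiv0$, after which one must still show $B\equiv0$ (e.g.\ writing $B_{\dot\gamma}=\dot Y Y^{-1}$ with $\ddot Y=0$ along complete leaf geodesics and using $\nu\ge\rho(n)$ to force real eigenvalues, which must then vanish) before the de Rham argument applies. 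These last points are repairable; the gap in the first step is not.
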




\end{document}